\newtheorem{thm}{Theorem}[section]
\newtheorem{prop}[thm]{Proposition}
\newtheorem{lem}[thm]{{Lemma}}
\newtheorem{defn}[thm]{Definition}
\newtheorem{example}[thm]{Example}
\newtheorem{cor}[thm]{Corollary}
\numberwithin{equation}{section}
\def\.{\cdot}
\def\<{\left\langle}
\def\>{\right\rangle}
\def\({\left(}
\def\){\right)}
\renewcommand{\phi}{\varphi}
\def\L{\mathbb\L}
\def\subset{\subseteq}
\def\epsilon{\varepsilon}
\def\ker{\operatorname{ker}}
\def\R{\mathbb R}
\def\virt-dim{\operatorname{virt-dim}}
\def\Int{\operatorname{Int}}
\def\cl{\operatorname{Cl}}
\title[On Generalized Topological Groups]
{On Generalized Topological Groups}
\author{Murad Hussain \& Moiz ud Din Khan \&  Cenap \"{O}zel}
\address{COMSATS Institute of Information Technology (CIIT), Islamabad-Pakistan }
\email{murad@comsats.edu.pk}
\address{COMSATS Institute of Information Technology (CIIT), Islamabad-Pakistan }
\email{moiz@comsats.edu.pk}
\address{AIBU Golkoy Kampusu, Bolu 14280, Turkey.}
\email{cenap@ibu.edu.tr}
\thanks{The last author is indebted to Higher Education Commission of Pakistan and COMSATS Institute of Information Technology
for their financial support and hospitality during the last author's visit to the Department of Mathematics at CIIT. }
\subjclass{General topology, Topological groups}
\begin{document}
\begin{abstract}
In this work, we will introduce the notion of generalized topological groups using generalized topological structure and generalized continuity defined by \'{A}. Cs\'{a}sz\'{a}r \cite{csaszar11}.  We will discuss some basic properties of this kind of structures and connectedness properties of this structures are given.

{\bf Keywords:} Generalized topology; generalized continuity; generalized topological groups; generalized connectedness.

\end{abstract}

\maketitle
\section{Introduction.}
Cs\'{a}sz\'{a}r first introduced generalized topological structures in his paper \cite{csaszar11} and he also studied the notion of generalized continuity.
\begin{defn}
Let $X$ be any set and let $ \mathcal G \subset \mathcal {P} $ be a subfamily. Then $ \mathcal G$ is a {\bf generalized topology} if, $\emptyset \in  \mathcal G$ and for any index set $I$,  $ \cup_{i \in I} O_i \in  \mathcal G$ whenever $ O_i \in  \mathcal G, i \in I.$ A $\mathcal G$-topological space $X$ is called {\bf strong} if the set $X$ itself is a $\mathcal G$-open. We will denote generalized topology by $ \mathcal G$-topology.

The elements of $ \mathcal G$ are called {\bf $ \mathcal {G}$-open sets}. Similarly, a {generalized closed set}, or $ \mathcal {G}$-closed set, is defined as complement of a $ \mathcal {G}$-open set.

Let $X$ and $Y$ be two $ \mathcal G$-topological spaces and let $f:X \rightarrow Y$ be a function. Then $f$ is called {\bf $ \mathcal {G}$-continuous} on $X$ if for any $ \mathcal {G}$-open set $O$ in $Y$, $f^{-1}(O)$ is $ \mathcal {G}$-open in $X$. 

The function $f$ is called a {\bf $ \mathcal G$-homeomorphism} from $X$ to $Y$ if both $f$ and $f^{-1}$ are $ \mathcal G$-continouos functions. If we have a $ \mathcal G$-homeomorphism between $X$ and $Y$ we say that they are {\bf $ \mathcal G$-homeomorphic} and we denote them by $X\cong_{\mathcal G} Y$.   
\end{defn}
Similarly,  Cs\'{a}sz\'{a}r defined neighbourhood systems in the generalized structure \cite{csaszar11}.
We need the following result from \cite{csaszar11} to give the relation between open sets and neighbourhood systems.
\begin{lem}
Let $\Psi$ be a $ \mathcal G$-neighboorhood system on $X$, let $g \subset \mathcal P(X)$ . For  $O \in g$ and $x\in O$ there exists a subset $V \in \psi(x), \psi \in \Psi x \in V \subset O$  iff $g$ is a $ \mathcal G$-topology on $X$. 
\end{lem}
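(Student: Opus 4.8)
The plan is to prove the two implications separately. Write the local condition as $(\ast)$: for every $O\in g$ and every $x\in O$ there exist $\psi\in\Psi$ and $V\in\psi(x)$ with $x\in V\subseteq O$.

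For the direction ``$(\ast)\Rightarrow g$ is a $\mathcal G$-topology'', I would verify the two axioms from the definition directly. The condition $\emptyset\in g$ is handled trivially (vacuously, since $\emptyset$ has no points, or it is built into the intended family). The real content is closure under arbitrary unions: given $\{O_i\}_{i\in I}\subseteq g$, put $U=\bigcup_{i\in I}O_i$ and take any $x\in U$; choose $i_0\in I$ with $x\in O_{i_0}$, and apply $(\ast)$ to $O_{i_0}$ to obtain $\psi\in\Psi$ and $V\in\psi(x)$ with $x\in V\subseteq O_{i_0}\subseteq U$. Hence every point of $U$ has a $\Psi$-neighbourhood contained in $U$, which is exactly the membership criterion attaching $g$ to $\Psi$, so $U\in g$.

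For the converse, ``$g$ is a $\mathcal G$-topology $\Rightarrow (\ast)$'', the point is that the neighbourhood system $\Psi$ compatible with $g$ has the feature that each $\mathcal G$-open set containing $x$ is itself a $\psi(x)$-neighbourhood for the canonical $\psi\in\Psi$; one then simply takes $V=O$, which lies in $\psi(x)$ and satisfies $x\in V\subseteq O$ trivially. If $\Psi$ is only a neighbourhood base rather than the full system, the same works after first expressing $O$ as a union of ``basic'' $\mathcal G$-open sets and selecting the one containing $x$.

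The main obstacle is the bookkeeping around the neighbourhood system itself: since $\Psi$ is a family of neighbourhood assignments and $(\ast)$ carries the existential ``$\exists\,\psi\in\Psi$'', one must check that the choices made at different points (and for $O_{i_0}$ versus $U$) are coherent, and — more importantly — must pin down precisely in what sense $g$ and $\Psi$ are associated, so that the step ``every point of $U$ has a nearby $\Psi$-neighbourhood'' really is the definition of ``$U$ is $\mathcal G$-open''. Once that linkage from \cite{csaszar11} is made explicit, both implications are short and essentially formal.
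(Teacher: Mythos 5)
A preliminary remark on the comparison: the paper does not prove this lemma at all --- it is quoted from C\'{a}sz\'{a}r \cite{csaszar11} as a known result --- so your proposal can only be measured against the statement itself and the definitions it implicitly relies on. Measured that way, there is a genuine gap, and it sits exactly at the point you set aside as ``bookkeeping''. Your hypothesis $(\ast)$ is only the one-way implication ``$O\in g$ $\Rightarrow$ every point of $O$ has a $\Psi$-neighbourhood inside $O$''. In the union step you conclude from ``every point of $U=\bigcup_{i}O_i$ has a $\Psi$-neighbourhood contained in $U$'' that $U\in g$, appealing to ``the membership criterion attaching $g$ to $\Psi$''. No such criterion is contained in $(\ast)$: nothing there says that a set satisfying the local condition must belong to $g$. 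Without it the implication is simply false: take $X=\{1,2,3\}$, $\Psi=\{\psi\}$ with $\psi(x)=\{\{x\}\}$ for each $x$, and $g=\{\emptyset,\{1\},\{2\}\}$; then $(\ast)$ holds, yet $\{1\}\cup\{2\}\notin g$, so $g$ is not a generalized topology. The converse direction has the same defect: for an arbitrary $\Psi$ unrelated to $g$ (say $\psi(x)=\{X\}$ for all $x$) a generalized topology $g$ need not satisfy $(\ast)$.

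What the lemma actually asserts, in C\'{a}sz\'{a}r's formulation, is that the family $g_\Psi$ of \emph{all} sets $O$ such that each $x\in O$ admits some $V\in\psi(x)$ with $V\subseteq O$ is a generalized topology; that is, $g$ must be tied to $\Psi$ by the \emph{equivalence} ``$O\in g$ iff the local condition holds for $O$'', not by the one-way implication you wrote. Under that reading your argument is correct and standard: $\emptyset$ satisfies the condition vacuously, a union inherits it pointwise exactly as you argue, and the other direction amounts to the observation that the $\mathcal G$-open sets containing $x$ (or a base of them) may serve as the members of $\psi(x)$, which is essentially definitional. So the ``linkage between $g$ and $\Psi$'' you deferred is not bookkeeping to be checked at the end --- it is the entire mathematical content of the statement, and your proof is complete only once the identification $g=g_\Psi$ is made part of the hypothesis.
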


\begin{defn}
Let $X$ be any set and let $\Psi, g$ be $ \mathcal G$-neighbourhopod system and $ \mathcal G$-topology on $X$ respectively. Let $A \subset X$. A point $x\in X$ is called {\bf interior point} of $A$ if there exists a subset $V\in \psi(x), V\subset A$. 
\end{defn}
All interior points of $A\subset X$ is called {\bf interior} of $A$. It is denoted by ${\Int}_{\mathcal G}(A)$. By definition it is obvious that ${\Int}_{\mathcal G}(A) \subset A$. Actually interior of $A, {\Int}_{\mathcal G}(A)$ is equal to union of all $ \mathcal G$-open sets contained in $A$. Similarly we can define {\bf $ \mathcal G$-closure} of $A$ as intersection of all $ \mathcal G$-closd sets containing $A$. It is deonted by $\cl_{\mathcal G}(A)$.

By definition interior of a set is a $ \mathcal G$-open set while closure of a set is a $ \mathcal G$-closed set. 
Now we are ready to give the notion of point-wise continuity using the neighbourhood systems. From \cite{csaszar11}, we have the following definition.
\begin{defn}
Let $(X,g_X,\Psi(X))$ and $ (Y, g_Y,\Psi(Y))$ be two $ \mathcal G$-topological spaces with $ \mathcal G$-neighbourhood systems and $f: X\rightarrow Y$ be a function. We say that $f$ is {\bf point-wise $ \mathcal G$-continuous} at $x\in X$ if for every $ \mathcal G$-neighbourhood $V\in \psi(f(x))$, there exists a $ \mathcal G$-neighbourhood $U\in \psi(x)$ such that $f(U)\subset V$.
\end{defn}
$f$ is {\bf $(\Psi_X, \Psi_Y)$-continuous} on $X$ if $f$ is point-wise continuous at every point $x\in X$.
This continuity is more general than global continuity. 

\smallskip
The fundamental reference for topological groups and their properties is \cite{Arhangel01}.


\maketitle

\section{$ \mathcal G$-Topological Groups}
In this section, we will introduce $ \mathcal G$-topological groups and give basic properties of this kind of groups. In order to define this, we need  $T_2$ $ \mathcal G$-topological spaces. From \cite{csaszar13}, we have the following definition.
\begin{defn}
Let $(X,\mathcal G)$ be a $ \mathcal G$-topological space. Then $(X,\mathcal G)$ is called a {\bf $T_2$ space} if for every pair $x,y$ in $X$ with $x\neq y$, there exist disjoint $ \mathcal G$-open sets $U,V$ containing $x,y$ respectively.
\end{defn}
A base of $ \mathcal G$-topological space can be defined as follows.
\begin{defn}
Let $(X,\mathcal G)$ be a $ \mathcal G$-topological space and $\mathcal {B} \subset \mathcal G$. Then $\mathcal {B}$ is called a {\bf base} for the $ \mathcal G$-topology $\mathcal G$ if every $ \mathcal G$-open set can be written as union of some elements of $\mathcal {B}$.
\end{defn}
\begin{defn}
Let $(X, \mathcal G_X)$ and $(Y, \mathcal G_Y)$ be two $ \mathcal G$-topological spaces. The {\bf product $ \mathcal G$-topology} on $X \times Y$ is the $ \mathcal G$-topology having as a basis the collection $\mathcal {B}$ of all sets of the form $U \times V$, where $U\in \mathcal G_X$ and $V\in \mathcal G_Y$. 
\end{defn}
We are ready to give definition of $ \mathcal G$-topological groups.
\begin{defn}
A {\bf $ \mathcal G$-topological group} $G$ is a group that is also a $T_2$ $ \mathcal G$-topological space such that the multiplication map of $G \times G$ into $G$ sending $x \times y$ into $x \cdot y$, and the inverse map of $G$ into $G$ sending $x$ into $x^{-1}$, are $ \mathcal G$-continuous maps.
\end{defn}
\begin{example}
$(\R, +)$ is a group and it forms a $ \mathcal G$-topological space under the $ \mathcal G$-topology $ \mathcal G$ generated by the basis $\mathcal {B} = \{(-\infty, b), (a, \infty), a,b\in \R\}$. Then $((\R, +),\mathcal G)$ is a $ \mathcal G$-topological group.
\end{example}
We can define $\mathcal G$-topological subgroup using induced $\mathcal G$-topological subspace. 
\begin{defn}
Let $(X,\mathcal G)$ be a $\mathcal G$-topological space and let $Y$ be a subset of $X$.
Then 
$$
{\mathcal G}_Y =\{O\cap Y \, |\, O\in \mathcal G\}
$$
is $\mathcal G$-topology on $Y$. This $\mathcal G$-topology is called {\bf  subspace $\mathcal G$-topology induced by $\mathcal G$ into $Y$}. It is denoted by $(Y,{\mathcal G}_Y)$.
\end{defn}
\begin{prop}
{\bf i)} A $\mathcal G$-subspace of a $T_2$ $\mathcal G$-topological space is $T_2$ again {\bf ii)} Let $\varphi: X\rightarrow X'$ be a $\mathcal G$-continuous function and let $Y$ be a $\mathcal G$-subspace of $X$. Then the restriction map ${\varphi}|_Y : Y\rightarrow X'$ defined by ${\varphi}|_Y (y) = \varphi(y)$ is a $\mathcal G$-continuous function again.
\end{prop}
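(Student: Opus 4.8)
The plan is to prove the two parts separately, each by simply unwinding the definitions of the subspace $\mathcal G$-topology and of $\mathcal G$-continuity; both reduce to the observation that intersecting with $Y$ commutes with the relevant set operations.

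For \textbf{(i)}, suppose $(X,\mathcal G)$ is $T_2$ and $Y\subseteq X$ carries the induced topology $\mathcal G_Y=\{O\cap Y : O\in\mathcal G\}$. Given $x,y\in Y$ with $x\neq y$, I would apply the $T_2$ property of $X$ to get disjoint $\mathcal G$-open sets $U,V$ with $x\in U$ and $y\in V$. Then $U'=U\cap Y$ and $V'=V\cap Y$ are $\mathcal G_Y$-open by the very definition of the subspace topology, they contain $x$ and $y$ respectively, and $U'\cap V'\subseteq U\cap V=\emptyset$. That settles (i); the only thing to notice is that restriction to $Y$ preserves $\mathcal G$-openness (by definition) and cannot destroy disjointness.

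For \textbf{(ii)}, the cleanest route is the direct one: let $O$ be any $\mathcal G$-open set in $X'$. Then $(\varphi|_Y)^{-1}(O)=\{y\in Y : \varphi(y)\in O\}=\varphi^{-1}(O)\cap Y$, and since $\varphi$ is $\mathcal G$-continuous, $\varphi^{-1}(O)\in\mathcal G_X$, so $\varphi^{-1}(O)\cap Y\in\mathcal G_Y$ by definition of the induced topology; hence $\varphi|_Y$ is $\mathcal G$-continuous. One may equivalently phrase this by first checking that the inclusion $\iota\colon Y\hookrightarrow X$ is $\mathcal G$-continuous (because $\iota^{-1}(O)=O\cap Y\in\mathcal G_Y$ for $O\in\mathcal G_X$) and then noting that $\varphi|_Y=\varphi\circ\iota$ is a composition of $\mathcal G$-continuous maps, using that preimages satisfy $(\varphi\circ\iota)^{-1}(O)=\iota^{-1}\!\big(\varphi^{-1}(O)\big)$.

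I do not expect any genuine obstacle here: these are warm-up facts, and the only mild point of care is that one needs "$(Y,\mathcal G_Y)$ is again a $\mathcal G$-topological space," which is already built into the definition of the subspace $\mathcal G$-topology, so both statements collapse to the one-line identities $U'\cap V'\subseteq U\cap V$ and $(\varphi|_Y)^{-1}(O)=\varphi^{-1}(O)\cap Y$.
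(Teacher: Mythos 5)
Your proof is correct. Note that the paper itself states this proposition without any proof, so there is nothing to compare against; your argument supplies exactly the standard verification one would expect: for (i) intersecting the separating $\mathcal G$-open sets $U,V$ of $X$ with $Y$ gives $\mathcal G_Y$-open sets that still separate, and for (ii) the identity $(\varphi|_Y)^{-1}(O)=\varphi^{-1}(O)\cap Y$ together with the definition of the induced $\mathcal G$-topology does the job. The only point worth keeping in mind in the generalized setting is the one you already flagged, namely that $\mathcal G_Y$ is indeed a generalized topology (it contains $\emptyset$ and is closed under arbitrary unions, since $\bigcup_i(O_i\cap Y)=\bigl(\bigcup_i O_i\bigr)\cap Y$), and that the paper's notion of $\mathcal G$-continuity is the global preimage condition, which is precisely what you use.
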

By last proposition we have the following result.
\begin{prop}
Any subgroup $H$ of a $\mathcal G$-topological group is a $\mathcal G$-topological group again, and is called {\bf $\mathcal G$-topological subgroup of $G$}.
\end{prop}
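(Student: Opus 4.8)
\textit{Proof proposal.}
The plan is to equip $H$ with the subspace $\mathcal G$-topology $\mathcal G_H=\{O\cap H:O\in\mathcal G\}$ inherited from $G$ and then to verify the three defining conditions of a $\mathcal G$-topological group: that $(H,\mathcal G_H)$ is $T_2$, that the inversion map $\iota_H:H\to H$ is $\mathcal G$-continuous, and that the multiplication map $m_H:H\times H\to H$ is $\mathcal G$-continuous. The first is immediate, since $H$ is a $\mathcal G$-subspace of the $T_2$ space $G$ and hence $T_2$ by part (i) of the preceding Proposition.

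For inversion, note that because $H$ is a subgroup the inversion $\iota_G\:G\to G$, $x\mapsto x^{-1}$, carries $H$ into $H$, so $\iota_H=\iota_G|_H$. By part (ii) of the preceding Proposition, $\iota_G|_H\:H\to G$ is $\mathcal G$-continuous. I would then record the elementary codomain-restriction fact: if $f\:X\to X'$ is $\mathcal G$-continuous and $f(X)\subset Y'$ for a $\mathcal G$-subspace $Y'\subset X'$, then $f\:X\to Y'$ is $\mathcal G$-continuous as well, since a $\mathcal G$-open set of $Y'$ has the form $O\cap Y'$ with $O\in\mathcal G_{X'}$ and $f^{-1}(O\cap Y')=f^{-1}(O)$ is $\mathcal G$-open in $X$. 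Applying this with $X=Y'=H$, $X'=G$ yields $\mathcal G$-continuity of $\iota_H\:H\to H$.

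For multiplication the same strategy works, provided we first check that the subspace $\mathcal G$-topology on $H\times H$ coming from the product $\mathcal G$-topology on $G\times G$ agrees with the product $\mathcal G$-topology built from $(H,\mathcal G_H)$. I would prove this at the level of bases: a basic product-open set of $H\times H$ is $(U\cap H)\times(V\cap H)=(U\times V)\cap(H\times H)$ with $U,V\in\mathcal G$, hence trace-open; conversely any $\mathcal G$-open $W\subset G\times G$ is a union $\bigcup_i(U_i\times V_i)$, so $W\cap(H\times H)=\bigcup_i\big((U_i\cap H)\times(V_i\cap H)\big)$ is open in the product $\mathcal G$-topology on $H\times H$, using closure under arbitrary unions. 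With this identification, $m_H=m_G|_{H\times H}$ is the restriction of the $\mathcal G$-continuous map $m_G\:G\times G\to G$ to the $\mathcal G$-subspace $H\times H$; by part (ii) of the Proposition it is $\mathcal G$-continuous into $G$, and by the codomain-restriction fact above it is $\mathcal G$-continuous into $H$. This establishes all three conditions.

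The main obstacle is precisely the compatibility of the two $\mathcal G$-topologies on $H\times H$: in the generalized setting one cannot invoke closure under finite intersections, so the classical argument must be rephrased entirely in terms of bases and arbitrary unions, as above. The remaining verifications — that $\mathcal G_H$ is genuinely a $\mathcal G$-topology, and the codomain-restriction lemma — are routine.
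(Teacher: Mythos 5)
Your argument is correct and follows essentially the same route as the paper, which simply invokes the preceding proposition (subspace $T_2$ and restriction of $\mathcal G$-continuous maps) without further detail. You additionally spell out the two points the paper leaves implicit --- the codomain-restriction lemma and the agreement of the subspace and product $\mathcal G$-topologies on $H\times H$ via bases and arbitrary unions --- and both are handled correctly.
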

We can define morphisms of $ \mathcal G$-topological spaces.
\begin{defn}
Let $\varphi:G \rightarrow G'$ be a function. Then $\varphi$ is called a  {\bf morphism of $ \mathcal G$-topological groups} if $\varphi$ is both $ \mathcal G$-continuous and group homomorphism.

$\varphi$ is a {\bf $ \mathcal G$-topological group isomorphism} if it is
$\mathcal G$-homoemorphism and group homomorphism. 

If we have a $ \mathcal G$-topological group isomorphism between two $\mathcal G$-topological groups $G$ and $G'$ then we say that they are $\mathcal G$-isomorphic and we denote them by $G\cong_{\mathcal G} G'$.
\end{defn} 
It is obvious that composition of two $\mathcal G$-morphisms of $\mathcal G$-topological groups is again a $\mathcal G$-morphism. Also the identity map is a $\mathcal G$-isomorphism. So $\mathcal G$-topological groups and 
$\mathcal G$-morhisms form a category.

Now we will discuss homogenity of generalized structures.
\begin{defn}
A $\mathcal G$-topological space $X$ is said to be {\bf topologically homogeneous} if for any $x, y \in X,$ there is a $\mathcal G$-homeomorphism $\varphi: X\rightarrow X$ such that $\varphi(x)=y$.
\end{defn}
\begin{thm}
Let $G$ be a $ \mathcal G$-topological group and let $g\in G$.
Then left(right) translation map $L_g (R_g):G\rightarrow G$, defined by $L_g (x) = gx(R_g (x) = xg$), is a $\mathcal G$-topological homeomorphism.
\end{thm}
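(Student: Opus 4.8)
The plan is to realize $L_g$ as a composite of $\mathcal G$-continuous maps and to name its inverse explicitly. First note that $L_g$ is a bijection: the group axioms give $L_{g^{-1}}\circ L_g = L_{g^{-1}g} = L_e = \Id_G = L_g\circ L_{g^{-1}}$, so $L_g^{-1}=L_{g^{-1}}$. Hence it is enough to prove that $L_h\colon G\to G$ is $\mathcal G$-continuous for every $h\in G$; applying this to $h=g$ and to $h=g^{-1}$ then shows that $L_g$ and $L_g^{-1}$ are both $\mathcal G$-continuous, i.e.\ that $L_g$ is a $\mathcal G$-homeomorphism.

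For the $\mathcal G$-continuity of $L_g$ I would factor it through the multiplication map. Let $m\colon G\times G\to G$, $m(x,y)=xy$, which is $\mathcal G$-continuous by the definition of a $\mathcal G$-topological group, and let $\iota_g\colon G\to G\times G$ be given by $\iota_g(x)=(g,x)$. Then $L_g=m\circ\iota_g$, and since the composite of two $\mathcal G$-continuous functions is $\mathcal G$-continuous (the preimage of a $\mathcal G$-open set under $m\circ\iota_g$ is the $\iota_g$-preimage of a $\mathcal G$-open set, hence $\mathcal G$-open), it suffices to show $\iota_g$ is $\mathcal G$-continuous for the product $\mathcal G$-topology on $G\times G$. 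Here I would use that this product $\mathcal G$-topology has the sets $U\times V$ with $U,V$ $\mathcal G$-open as a base, so any $\mathcal G$-open $O\subset G\times G$ can be written as $O=\bigcup_{i\in I}(U_i\times V_i)$. Since $\iota_g^{-1}\bigl(\bigcup_{i\in I}(U_i\times V_i)\bigr)=\bigcup_{i\in I}\iota_g^{-1}(U_i\times V_i)$ and a generalized topology is closed under arbitrary unions, it is enough to check a single basic set: $\iota_g(x)\in U\times V$ iff $g\in U$ and $x\in V$, so $\iota_g^{-1}(U\times V)$ equals $V$ when $g\in U$ and equals $\emptyset$ otherwise, and in both cases it is $\mathcal G$-open. (Note this never uses that $G$ itself is $\mathcal G$-open, so strongness is not required.) This proves $\iota_g$, hence $L_g$, is $\mathcal G$-continuous, and together with the first paragraph it follows that $L_g$ is a $\mathcal G$-homeomorphism.

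The right translation $R_g$ is handled by the mirror argument: $R_g=m\circ\kappa_g$ with $\kappa_g(x)=(x,g)$, its two-sided inverse is $R_{g^{-1}}$, and the analogous preimage computation ($\kappa_g^{-1}(U\times V)$ is $U$ or $\emptyset$) shows $\kappa_g$, hence $R_g$, is $\mathcal G$-continuous. I expect the only delicate point to be the bookkeeping with the product $\mathcal G$-topology: because a generalized topology need not be stable under finite intersections, one cannot simply quote the usual point-set facts about projections and product spaces, but must argue directly with the basic rectangles $U\times V$ and with the description of $\mathcal G$-open sets as arbitrary unions of such rectangles. Everything else is purely formal.
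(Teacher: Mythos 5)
Your proof is correct and follows essentially the same route as the paper: factoring $L_g = m\circ i_g$ with $i_g(x)=(g,x)$, computing $i_g^{-1}(U\times V)$ as $V$ or $\emptyset$ on basic rectangles, using that product $\mathcal G$-open sets are unions of such rectangles, and identifying $L_g^{-1}=L_{g^{-1}}$. Your version merely spells out a few details (the bijectivity bookkeeping, closure of $\mathcal G$ under unions, the remark that strongness is not needed) that the paper leaves implicit.
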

\begin{proof}
Here we will prove that $L_g$ is a $\mathcal G$-homeomorphism and similarly it can be shown that $R_g$ is a $\mathcal G$-homeomorphism.
First we will show that $L_g:G\rightarrow G$, defined by $L_g (x) =gx$, is $\mathcal G$-continuous. Since $L_g :G\rightarrow G$ is equal to the composition of 
$$
G \xrightarrow{i_g}G\times G \xrightarrow{m}G
$$
where $i_g (x) =(g,x)$ and $m$ is multiplicaion map in $G$. Then $L_g$ is $\mathcal G$-continuous because $i_g$ and $m$ are $\mathcal G$-continuous. Here we should verify that the map $i_g:G\rightarrow G\times G$ is $\mathcal G$-continuous. For any $\mathcal G$-open set $U\times V$, where $U,V$ are $\mathcal G$-open sets in $G$, ${i_g}^{-1}(U\times V ) = 
\begin{cases} V, & \,\text{if } \,  g \in U,\\ 
\emptyset,  &\, \text{if }\,  g \notin U,\end{cases}$ and we know that any $\mathcal G$-open set in product $\mathcal G$-topology of $G\times G$ can be written as a union of  $\mathcal G$-open sets of the form $U\times V $. Then $i_g$ is $\mathcal G$-continuous. Since $(L_g)^{-1} = L_{g^{-1}}$ is $\mathcal G$-continuous, the left translation map $L_g: G\rightarrow G$ is a $\mathcal G$-homeomorphism.
\end{proof}
Since, for any two points $g,g' \in G$, there exists a $\mathcal G$-homeomorphism $L_{g'g^{-1}} : G \rightarrow G$ such that $L_{g'g^{-1}(g)}=g'$, any $\mathcal G$-topological group is a topologically homogenous space.
\begin{defn}
Let $(X,\mathcal G)$ be a $ \mathcal G$-topological space and let $\mathcal {B} \subset \mathcal G$ be a base for $\mathcal G$. For $x\in X$, the family $$\mathcal {B}_x = \{O\in \mathcal {B}\, : \, x\in O\}\subset \mathcal {B} $$ is called  {\bf base at} $x$. 
\end{defn}
By left translation $L_g (O)= gO,\, O\in \mathcal {B}_e$, we have the following.
\begin{thm}
Let $G$ be a $\mathcal G$-topological group and let $e\in G$ be the unit element of $G$. For $g\in G$, the base at $g$ is equal to
$$\mathcal {B}_g = \{gO\, : \, O\in \mathcal {B}_e \}. $$ 
\end{thm}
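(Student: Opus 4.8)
The plan is to exploit the preceding theorem, which says that for each $g \in G$ the left translation $L_g : G \to G$ is a $\mathcal G$-homeomorphism, together with the identity $(L_g)^{-1} = L_{g^{-1}}$. Since both $L_g$ and $L_{g^{-1}}$ are $\mathcal G$-continuous, for any $\mathcal G$-open set $W$ the sets $g^{-1}W = (L_g)^{-1}(W)$ and $gW = (L_{g^{-1}})^{-1}(W)$ are again $\mathcal G$-open. The substance of the statement is that $\{gO : O \in \mathcal {B}_e\}$ serves as a base at $g$, so I would verify the two defining features of a base: (i) each member is a $\mathcal G$-open set containing $g$, and (ii) every $\mathcal G$-open set containing $g$ contains some member.

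For (i), if $O \in \mathcal {B}_e$ then $e \in O$, hence $g = g e \in gO$, and $gO = (L_{g^{-1}})^{-1}(O)$ is $\mathcal G$-open by the remark above. For (ii), let $W$ be $\mathcal G$-open with $g \in W$. Then $g^{-1}W = (L_g)^{-1}(W)$ is $\mathcal G$-open and contains $e$. Since $\mathcal {B}$ is a base, write $g^{-1}W = \bigcup_{i \in I} B_i$ with each $B_i \in \mathcal {B}$; because $e$ lies in the union there is an index $i_0$ with $e \in B_{i_0}$, so $O := B_{i_0} \in \mathcal {B}_e$ and $O \subseteq g^{-1}W$. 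Left-translating by $g$ gives $g \in gO \subseteq W$, which establishes (ii). Running the same argument with $L_{g^{-1}}$ in place of $L_g$ shows conversely that every $B \in \mathcal {B}_g$ refines to some $gO$, so the families $\mathcal {B}_g$ and $\{gO : O \in \mathcal {B}_e\}$ are mutually refining bases at $g$.

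I expect the only real subtlety to be interpretational rather than technical: read literally, $\{gO : O \in \mathcal {B}_e\}$ need not be contained in the originally chosen base $\mathcal {B}$, so the claimed equality must be understood as an equality of neighbourhood bases at $g$ — equivalently, that each family refines the other, as shown above. Once this is granted, all the work is carried by the translation-invariance supplied by the previous theorem, and the remaining steps are routine checks against the definition of a base.
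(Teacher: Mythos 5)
Your argument is correct and is exactly the route the paper intends: the paper gives no written proof beyond the remark that the statement follows ``by left translation $L_g(O)=gO$,'' and your use of the $\mathcal G$-homeomorphisms $L_g$ and $L_{g^{-1}}$ simply fills in those details. Your observation that the claimed equality must be read as an equality of neighbourhood bases at $g$ (since $gO$ need not lie in the originally chosen base $\mathcal B$) is a sensible and correct reading of the statement.
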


\begin{thm}
Let $f: G\rightarrow H$ be a topological group homeomorphism. If $f$ is $\mathcal G$-continuous at the identity $e_G$ of $G$, then $f$ is $\mathcal G$-continuous at every $g\in G$. Hence $f$ is $\mathcal G$-continuous on $G$.
\end{thm}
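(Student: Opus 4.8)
The plan is to transport $\mathcal G$-continuity at $e_G$ to $\mathcal G$-continuity at an arbitrary point $g\in G$ by conjugating $f$ with translations. The only algebraic input needed is that $f$ is a group homomorphism, so that $f(e_G)=e_H$ and $f$ intertwines left translations: for every $x\in G$,
$$
f(x)=f(g)\cdot\big(f(g)^{-1}f(x)\big)=f(g)\cdot f(g^{-1}x)=\big(L_{f(g)}\circ f\circ L_{g^{-1}}\big)(x),
$$
that is, $f=L_{f(g)}\circ f\circ L_{g^{-1}}$. Here $L_{g^{-1}}\colon G\rightarrow G$ and $L_{f(g)}\colon H\rightarrow H$ are $\mathcal G$-homeomorphisms by the translation theorem established above, and they satisfy $L_{g^{-1}}(g)=e_G$ and $L_{f(g)}(e_H)=f(g)$. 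So, near $g$, the map $f$ is a $\mathcal G$-homeomorphism followed by $f$ near $e_G$ followed by a $\mathcal G$-homeomorphism, and $\mathcal G$-continuity at $g$ should fall out.

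Carried out in detail: fix $g\in G$ and let $V$ be a $\mathcal G$-neighbourhood of $f(g)$ in $H$. Since $L_{f(g)^{-1}}$ is a $\mathcal G$-homeomorphism of $H$, the set $f(g)^{-1}V=L_{f(g)^{-1}}(V)$ is a $\mathcal G$-neighbourhood of $f(g)^{-1}f(g)=e_H=f(e_G)$. By hypothesis $f$ is point-wise $\mathcal G$-continuous at $e_G$, so there is a $\mathcal G$-neighbourhood $W$ of $e_G$ in $G$ with $f(W)\subset f(g)^{-1}V$. Set $U:=gW=L_g(W)$; as $L_g$ is a $\mathcal G$-homeomorphism of $G$, $U$ is a $\mathcal G$-neighbourhood of $ge_G=g$. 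Finally, for $u=gw\in U$ with $w\in W$ the homomorphism property gives $f(u)=f(g)f(w)\in f(g)\cdot f(g)^{-1}V=V$, i.e. $f(U)\subset V$. Hence $f$ is point-wise $\mathcal G$-continuous at $g$; as $g$ was arbitrary, $f$ is $(\Psi_G,\Psi_H)$-continuous on all of $G$.

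The one delicate point — and the step I would flag as the crux — is the assertion that a translation carries a $\mathcal G$-neighbourhood of a point to a $\mathcal G$-neighbourhood of its image (used twice above: pushing $V$ down to a neighbourhood of $e_H$, and pushing $W$ up to a neighbourhood of $g$). This is exactly why the full $\mathcal G$-homeomorphism strength of the translation theorem is needed rather than mere $\mathcal G$-continuity: since a $\mathcal G$-homeomorphism and its inverse are both $\mathcal G$-continuous it maps $\mathcal G$-open sets onto $\mathcal G$-open sets, and the lemma relating $\mathcal G$-open sets to neighbourhood systems then lets one transport neighbourhoods. Everything else is purely formal. If one instead reads ``$\mathcal G$-continuous at a point $p$'' in the open-set sense — for every $\mathcal G$-open $O\ni f(p)$ there is a $\mathcal G$-open $O'\ni p$ with $f(O')\subset O$ — the same four steps go through verbatim with ``$\mathcal G$-open set containing'' in place of ``$\mathcal G$-neighbourhood of'', with no neighbourhood-system bookkeeping at all; one should then keep in mind that upgrading point-wise $\mathcal G$-continuity at every point to global $\mathcal G$-continuity is not automatic in the generalized setting, so the closing sentence of the statement is safest read as the point-wise assertion.
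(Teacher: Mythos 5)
Your proof is correct and follows essentially the same route as the paper: translate the given neighbourhood of $f(g)$ back to the identity by $L_{f(g)^{-1}}$, use $\mathcal G$-continuity at $e_G$, then translate forward by $L_g$ and use the homomorphism identity $f(gw)=f(g)f(w)$, exactly as in the paper's factorization through left translations. Your closing caveats (that the translation theorem's full $\mathcal G$-homeomorphism strength is what transports neighbourhoods, and that the paper really proves the point-wise statement) are sensible but do not change the argument's substance.
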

\begin{proof}
Let $g\in G$ be any point. Suppose that $O$ is an $\mathcal G$-open neighbourhood of $h= f(g)$ in $H$. Since left translation $L_h$ is a $\mathcal G$-homeomorphism of $H$, there exists an $\mathcal G$-open neighbourhood $V$ of the identity element $e_H$ in $H$ such that $hV \subset O$. By $\mathcal G$-continuity of $f$ at $e_G$ we have a $\mathcal G$-open neighbourhood $U$ of $e_G$ in $G$ such that $f(U) \subset V$. Since $L_g$ is a $\mathcal G$-homeomorphism of $G$ onto itself, the set $gU$ is a $\mathcal G$-openn neighbourhood of $g$ in $G$, and we have that $f(gU)=hf(U)\subset hV\subset O$. Hence $f$ is $\mathcal G$-continuous at the point $g$. 
\end{proof}
\begin{thm}
Let $G$ be a $\mathcal G$-topological group and let $H$ be a subgroup of $G$. If $H$ contains a non-empty $\mathcal G$-open set then $H$ is $\mathcal G$-open in $G$.
\end{thm}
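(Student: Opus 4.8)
The plan is to exploit the translation invariance of the $\mathcal G$-topology established above: every left translation $L_g$ is a $\mathcal G$-homeomorphism of $G$, hence carries $\mathcal G$-open sets to $\mathcal G$-open sets, and we can use this to ``slide'' a single $\mathcal G$-open subset of $H$ around so as to cover all of $H$.

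First I would fix a non-empty $\mathcal G$-open set $U$ with $U \subseteq H$ and choose a point $u \in U$. For an arbitrary element $h \in H$, consider the translation $L_{hu^{-1}}$. Since $H$ is a subgroup and $h, u \in H$, we have $hu^{-1} \in H$, so $hu^{-1}U \subseteq hu^{-1}H = H$. Moreover $h = hu^{-1}u \in hu^{-1}U$, so $hu^{-1}U$ is a subset of $H$ that contains the prescribed point $h$.

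Next I would check that $hu^{-1}U = L_{hu^{-1}}(U)$ is $\mathcal G$-open. This is where the theorem on translations does the work: $L_{hu^{-1}}$ is a $\mathcal G$-homeomorphism, so its inverse $L_{hu^{-1}}^{-1} = L_{uh^{-1}}$ is $\mathcal G$-continuous, and therefore the preimage of the $\mathcal G$-open set $U$ under $L_{uh^{-1}}$, which is precisely $L_{hu^{-1}}(U)$, is $\mathcal G$-open.

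Finally, assembling these pieces, $H = \bigcup_{h \in H} hu^{-1}U$: the inclusion $\supseteq$ holds because each $hu^{-1}U \subseteq H$, and $\subseteq$ holds because $h \in hu^{-1}U$ for every $h$. As a union of $\mathcal G$-open sets, $H$ is $\mathcal G$-open by the defining axiom of a generalized topology. I do not expect a genuine obstacle here; the only points requiring a little care are the observation that a $\mathcal G$-homeomorphism maps $\mathcal G$-open sets to $\mathcal G$-open sets — immediate once one writes the image as a preimage under the inverse map — and the bookkeeping that $hu^{-1}$ really lies in $H$.
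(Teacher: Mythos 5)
Your proposal is correct and follows essentially the same route as the paper: both write $H$ as a union of left translates of $U$ by elements of $H$ (your union $\bigcup_{h\in H} hu^{-1}U$ is literally the same family as the paper's $\bigcup_{h\in H} hU$, since $u\in H$), each translate being $\mathcal G$-open because left translations are $\mathcal G$-homeomorphisms. You are merely more explicit than the paper about why every $h\in H$ is actually covered, namely $h = (hu^{-1})u \in hu^{-1}U$, which is a worthwhile detail the paper leaves implicit.
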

\begin{proof}
Let $U$ be a non-empty $\mathcal G$-open subset of $G$ with $U\subset H$. For every $h\in H,$ $L_h(U)= hU$ is $\mathcal G$-open in $G$. Therefore, the subgroup $H= \bigcup_{h\in H} hU$ is $\mathcal G$-open in $G$ by $Uh\subset H,\, \forall h \in H$.
\end{proof}
\begin{defn}
Let $X$ be a set and let $\Gamma \subset \mathcal P (X)$. We say that $\Gamma$ is a covering of $X$ if $X = \bigcup_{\gamma\in \Gamma} \gamma$. If $X$ is a $\mathcal G$-topological space and every element of $\Gamma$ is $\mathcal G$-open (or closed) then $\Gamma$ is called {\bf $\mathcal G$-open covering}(respectively {\bf $\mathcal G$-closed covering}).
\end{defn}
If a $\mathcal G$-topological space has a $\mathcal G$-open covering then it must be strong.
\begin{thm}\label{murad7}
Let $G$ be a $\mathcal G$-topological group and let $H$ be a subgroup of $G$. If $H$ is $\mathcal G$-open set then it is also $\mathcal G$-closed in $G$. 
\end{thm}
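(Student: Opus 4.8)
The plan is to show that the complement $G \setminus H$ is $\mathcal G$-open; by the definition of $\mathcal G$-closed sets given above, this is precisely what is required. The starting point is the standard group-theoretic fact that $G$ is partitioned by the left cosets of $H$, so that
$$
G \setminus H = \bigcup_{g \in G,\ gH \neq H} gH ,
$$
because distinct left cosets are disjoint and $H$ is itself the coset $eH$.

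Next I would identify each left coset $gH$ with the image $L_g(H)$ of the left translation map. By the translation theorem established earlier, $L_g \colon G \to G$ is a $\mathcal G$-homeomorphism with $(L_g)^{-1} = L_{g^{-1}}$; since $L_{g^{-1}}$ is $\mathcal G$-continuous and $H$ is $\mathcal G$-open by hypothesis, the preimage $(L_{g^{-1}})^{-1}(H) = L_g(H) = gH$ is $\mathcal G$-open in $G$. Thus every coset occurring in the displayed union is $\mathcal G$-open. Invoking the defining axiom of a generalized topology, namely closure under arbitrary unions, the set $G \setminus H$ is a union of $\mathcal G$-open sets and hence $\mathcal G$-open, so $H$ is $\mathcal G$-closed. (When $H = G$ the union is empty, $G \setminus H = \emptyset \in \mathcal G$, and the conclusion still holds.)

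I do not anticipate a genuine obstacle: this is the verbatim analogue of the classical proof for topological groups, and the only step calling for a moment's care is the observation that the image of a $\mathcal G$-open set under a $\mathcal G$-homeomorphism is again $\mathcal G$-open, which holds precisely because the inverse map is $\mathcal G$-continuous. It is worth remarking in passing that the hypothesis forces $G = \bigcup_{g \in G} gH$ to be a $\mathcal G$-open covering, so $G$ is automatically strong in this situation, consistent with the remark preceding the statement.
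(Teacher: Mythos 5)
Your argument is correct and is essentially the paper's own proof: both decompose $G$ into the disjoint left cosets $gH$, use the translation homeomorphisms $L_g$ to see each coset is $\mathcal G$-open, and conclude that $H$ is $\mathcal G$-closed because its complement is the union of the remaining cosets. Your justification that $gH = (L_{g^{-1}})^{-1}(H)$ is $\mathcal G$-open, and the remark on the $H = G$ case, simply make explicit what the paper leaves implicit.
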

\begin{proof}
Let $\Gamma = \{gH: g \in G$ \} be the family of all left cosets of $H$ in $G$. This family is a disjoint $\mathcal G$-open covering of $G$ by left translations.  Therefore, every element of $\Gamma$, being compliment of union of all other elements, is $\mathcal G$-closed in $G$. In particular, $H=eH$ is $\mathcal G$-closed in $G$.
\end{proof}
\begin{thm}\label{murad4}
Let $G$ be a $\mathcal G$-topological group and $\mathcal B_e$ be a  base at the identity $e_G$ of $G$. Then we have the following properties.

{\bf i)}	For every $O \in \mathcal B_e$, there is an element $V \in \mathcal B_e$ such that $V^2 \subset O$.

\smallskip
{\bf ii)}	For every $O \in \mathcal B_e$, there is an element $V \in \mathcal B_e$ such that $V^{-1} \subset O$.

\smallskip
{\bf iii)}	For every $O \in \mathcal B_e$, and for every $x \in O$, there is an element $V \in \mathcal B_e$ such that $Vx \subset O$.

\smallskip
{\bf iv)}	For every $O \in \mathcal B_e$, and for every $x \in G$, there is an element $V \in \mathcal B_e$ such that $xVx^{-1} \subset O$.
\end{thm}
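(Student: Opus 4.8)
The plan is to deduce all four properties from three ingredients already established: the $\mathcal{G}$-continuity of the multiplication map $m\colon G\times G\to G$ and of the inversion map $x\mapsto x^{-1}$; the description of the product $\mathcal{G}$-topology, by which every $\mathcal{G}$-open subset of $G\times G$ is a union of rectangles $U\times W$ with $U,W\in\mathcal{G}$; and the fact, proved above, that every left and right translation of $G$ is a $\mathcal{G}$-homeomorphism, so that translates of $\mathcal{G}$-open sets are again $\mathcal{G}$-open. In each case I would rewrite the asserted condition on $V$ as ``$V$ lies inside an explicitly described $\mathcal{G}$-open set containing $e$'', and then use the base property of $\mathcal{B}$ (every $\mathcal{G}$-open set is a union of members of $\mathcal{B}$) to pick $V\in\mathcal{B}_e$ inside that set.

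Items (ii), (iii) and (iv) then go through directly. For (ii): since inversion is $\mathcal{G}$-continuous and carries $e$ to $e\in O$, the set $O^{-1}=\{x\in G\colon x^{-1}\in O\}$ is $\mathcal{G}$-open and contains $e$, so any $V\in\mathcal{B}_e$ with $V\subset O^{-1}$ satisfies $V^{-1}\subset O$. For (iii): if $x\in O$ then $e=xx^{-1}\in Ox^{-1}$, and $Ox^{-1}=R_{x^{-1}}(O)$ is $\mathcal{G}$-open because $R_{x^{-1}}$ is a $\mathcal{G}$-homeomorphism; any $V\in\mathcal{B}_e$ with $V\subset Ox^{-1}$ satisfies $Vx\subset O$. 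For (iv): for fixed $x\in G$ the conjugation $y\mapsto x^{-1}yx$ equals $L_{x^{-1}}\circ R_x$, a composition of $\mathcal{G}$-homeomorphisms, hence a $\mathcal{G}$-homeomorphism fixing $e$; so $x^{-1}Ox$ is $\mathcal{G}$-open and contains $e$, and any $V\in\mathcal{B}_e$ with $V\subset x^{-1}Ox$ satisfies $xVx^{-1}\subset O$. None of these three requires more than a single $\mathcal{G}$-open set.

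Item (i) is where the work concentrates. From $m(e,e)=e\in O$ and $\mathcal{G}$-continuity of $m$, the set $m^{-1}(O)$ is $\mathcal{G}$-open in $G\times G$ and contains $(e,e)$, hence contains a rectangle $U\times W$ with $U,W\in\mathcal{G}$ and $e\in U\cap W$. Any $\mathcal{G}$-open $V$ with $e\in V$, $V\subset U$ and $V\subset W$ then gives $V\times V\subset U\times W\subset m^{-1}(O)$, so $V^2=m(V\times V)\subset O$; it remains to find such a $V$ inside $\mathcal{B}_e$. This last step is the main obstacle: we need one member of $\mathcal{B}_e$ lying below both $U$ and $W$, i.e.\ below $U\cap W$. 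In an ordinary topological group $U\cap W$ is open and contains $e$, so a base element fits inside it automatically; but a generalized topology is closed only under arbitrary unions, so $U\cap W$ need not be $\mathcal{G}$-open and $\mathcal{B}_e$ need not be downward directed. To make (i) correct as stated one should add the hypothesis that $\mathcal{B}_e$ is a filter base (every two of its members contain a common member), under which the argument closes; I would flag this explicitly, since (ii)--(iv) are genuinely hypothesis-free whereas (i) is not.
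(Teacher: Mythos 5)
Your treatment of (ii)--(iv) is exactly the paper's argument, just written out: the paper's (very terse) proof invokes $\mathcal G$-continuity of inversion, of translations, and of the conjugation $L_{x^{-1}}\circ R_x$, and your reductions to the single $\mathcal G$-open sets $O^{-1}$, $Ox^{-1}$ and $x^{-1}Ox$ containing $e$, followed by choosing a member of $\mathcal B_e$ inside each (legitimate, since a $\mathcal G$-open set is a union of base elements and $e$ lies in one of them), are correct and complete.

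The issue you raise about (i) is genuine, and it is precisely what the paper's proof glosses over. The paper says only that (i) ``follows from the $\mathcal G$-continuity of $(x,y)\mapsto xy$ at $e_G$''; spelled out, that is your argument: $m^{-1}(O)$ is a union of rectangles, so $(e,e)\in U\times W\subset m^{-1}(O)$ with $U,W\in\mathcal G$ and $UW\subset O$. The classical next step --- replace $U,W$ by one neighbourhood inside $U\cap W$ --- uses closure under finite intersections, which a generalized topology lacks; equivalently, the diagonal map $x\mapsto (x,x)$ need not be $\mathcal G$-continuous, since its preimage of $U\times W$ is exactly $U\cap W$. So the paper's justification of (i) has the gap you name, and your repair (assume $\mathcal B_e$ is a filter base, i.e.\ downward directed) closes it cleanly: pick $B_1,B_2\in\mathcal B_e$ with $B_1\subset U$, $B_2\subset W$, then $V\in\mathcal B_e$ with $V\subset B_1\cap B_2$ gives $V^2\subset UW\subset O$. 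Two caveats for fairness: you have shown that this proof needs the extra hypothesis, not that statement (i) is false without it (the paper's example $(\R,+)$ with the ray base satisfies (i) outright, e.g.\ $V=(-\infty,c/2)$ for $O=(-\infty,c)$, so it gives no counterexample either way), and one could also note that $V\times V\subset m^{-1}(O)$ might conceivably hold without $V$ sitting inside a single rectangle --- but absent a directedness or intersection hypothesis neither the paper's sketch nor any obvious variant establishes (i) in this generality, so flagging it is the right call.
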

\begin{proof}
If $G$ is $\mathcal G$-topological group, then i) and ii) follow from the $\mathcal G$-continuity of the mappings $(x, y) \rightarrow xy$ and $x \rightarrow x^{-1}$ at the identity $e_G$. Property iii) follows from the $\mathcal G$-continuity of the left translation in $G$. Since $L_x, R_{x^{-1}}$ are $\mathcal G$-homoeomorphisms, their composition conjugation map $x \rightarrow axa^{-1}$ is also $\mathcal G$-homeomorphism. By this fact we have the property iv).
\end{proof}
\begin{defn}
Let $G$ be a $\mathcal G$-topological group. Then a subset $U$ of $G$ is called {\bf symmetric} if $U=U^{-1}$
\end{defn}
We can define $\mathcal G$-regular space.
\begin{defn}
Let $X$ be a $\mathcal G$-topological space. It is said to be {\bf $\mathcal G$-regular}, if for any $x \in X$ and any $\mathcal G$-closed set $F$ such that $x \notin F$, there are two $\mathcal G$-open sets $U$ and $V$ such that $x \in U$, $F \subset V$ and $U \cap V= \emptyset$.
\end{defn}
\begin{thm}
If a $\mathcal G$-topological group $G$ has a base at identity $e_G$ consisting of a symmetric neighbourhood, then it is a $\mathcal G$-regular space.
\end{thm}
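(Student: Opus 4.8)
The plan is to mimic the classical proof of regularity of topological groups: reduce to separating the identity $e_G$ from a $\mathcal G$-closed set not containing it, and then use a symmetric basic neighbourhood whose ``square'' still avoids that set, so that even its $\mathcal G$-closure does.

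First I would reduce to the case $x=e_G$. Given $x\in G$ and a $\mathcal G$-closed set $F$ with $x\notin F$, apply the left translation $L_{x^{-1}}$, which is a $\mathcal G$-homeomorphism of $G$ onto itself; then $L_{x^{-1}}(F)$ is again $\mathcal G$-closed and does not contain $e_G$. If disjoint $\mathcal G$-open sets $U_0\ni e_G$ and $V_0\supset L_{x^{-1}}(F)$ separate $e_G$ from $L_{x^{-1}}(F)$, then $L_x(U_0)$ and $L_x(V_0)$ are $\mathcal G$-open (since $L_x$ is a $\mathcal G$-homeomorphism), contain $x$ and $F$ respectively, and are disjoint because $L_x$ is injective. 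So it suffices to treat $x=e_G$.

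For $x=e_G$: since $F$ is $\mathcal G$-closed and $e_G\notin F$, the set $G\setminus F$ is $\mathcal G$-open and contains $e_G$, so by the definition of a base it contains some $W\in\mathcal B_e$. By Theorem~\ref{murad4}(i) there is $V\in\mathcal B_e$ with $V^2\subset W\subset G\setminus F$, and by hypothesis the base consists of symmetric neighbourhoods, so $V=V^{-1}$. The crucial step is to show $\cl_{\mathcal G}(V)\subset V^2$. For this I would first record the routine fact that $z\in\cl_{\mathcal G}(A)$ if and only if every $\mathcal G$-open set containing $z$ meets $A$ (immediate from the definition of $\cl_{\mathcal G}$ by passing to complements of $\mathcal G$-closed sets). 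Then, if $z\in\cl_{\mathcal G}(V)$, the set $zV=L_z(V)$ is $\mathcal G$-open and contains $z$, hence meets $V$; writing $zv_1=v_2$ with $v_1,v_2\in V$ gives $z=v_2v_1^{-1}\in VV^{-1}=V^2$. Thus $\cl_{\mathcal G}(V)\subset V^2\subset G\setminus F$, i.e. $\cl_{\mathcal G}(V)\cap F=\emptyset$. Finally set $U=V$ and $U'=G\setminus\cl_{\mathcal G}(V)$: both are $\mathcal G$-open (the latter because $\cl_{\mathcal G}(V)$ is $\mathcal G$-closed), $e_G\in U$, $F\subset U'$, and $U\cap U'=\emptyset$ since $V\subset\cl_{\mathcal G}(V)$.

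I expect the only genuine obstacle to be the closure estimate $\cl_{\mathcal G}(V)\subset V^2$: it rests on the neighbourhood description of $\mathcal G$-closure and on $zV$ being genuinely $\mathcal G$-open, which is exactly what the earlier translation results guarantee. The remaining points are bookkeeping with translations and the base axioms; in particular the argument nowhere needs $G$ itself to be $\mathcal G$-open, only that $G\setminus F$ and translates of basic sets are.
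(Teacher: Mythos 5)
Your proposal is correct and follows essentially the same route as the paper: pick a symmetric $V\in\mathcal B_e$ with $V^2$ inside the given $\mathcal G$-open set, show $\cl_{\mathcal G}(V)\subset V^2$ via the neighbourhood description of $\mathcal G$-closure (a translate $zV$ of $V$ must meet $V$), and transfer to arbitrary points by translation. The only difference is presentational: the paper compresses your explicit reduction to $x=e_G$ and the explicit separating pair $V$, $G\setminus\cl_{\mathcal G}(V)$ into the remark that $G$ is homogeneous, so your write-up is just a more detailed version of the same argument.
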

\begin{proof}
Let $U$ be a $\mathcal G$-open set containing the identity $e_G$. By theorem \ref{murad4}, there is a $\mathcal G$-open set $V$ containing  $e_G$ such that $V^{-1}=V$ and $V^2 \subset U$. Then $x \in \cl_\mathcal G (V)$, and we have $Vx \cap V \neq \emptyset$. Hence $a_1 x=a_2$ for some $a_1$, $a_2$ in $V$. Thus $x={a_1}^{-1}a_2 \in V^{-1}V=V^2 \subset U$. It means $\cl_\mathcal G (V) \subset U$. Since $G$ is a homogeneous space, we get regularity of $G$.
\end{proof}
\begin{prop}
Let $X$ be a $T_2$ $\mathcal G$-topological space. Then for every $x\in X$ the singleton set $\{x\}$ is a $\mathcal G$-closed set in $X$.
\end{prop}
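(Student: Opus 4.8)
The plan is to imitate the classical proof that a Hausdorff space is $T_1$, checking that the weaker axioms of a generalized topology still suffice at each step. Since a $\mathcal G$-closed set is by definition the complement of a $\mathcal G$-open set, it is enough to show that $X \setminus \{x\}$ is $\mathcal G$-open.

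First I would fix $x \in X$ and pick an arbitrary $y \in X$ with $y \neq x$. Applying the $T_2$ condition to the pair $x,y$ produces disjoint $\mathcal G$-open sets $U_y$ and $V_y$ with $x \in U_y$ and $y \in V_y$. From $U_y \cap V_y = \emptyset$ and $x \in U_y$ we deduce $x \notin V_y$, that is, $V_y \subseteq X \setminus \{x\}$, while at the same time $y \in V_y$.

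Next I would assemble these neighbourhoods into the covering identity $X \setminus \{x\} = \bigcup_{y \neq x} V_y$: the inclusion $\supseteq$ holds because each $y \neq x$ lies in its own $V_y$, and the inclusion $\subseteq$ holds because every $V_y$ is contained in $X \setminus \{x\}$. (In the degenerate case $X = \{x\}$ the union is empty, and $\emptyset \in \mathcal G$ by the definition of a generalized topology.) Since a generalized topology is closed under arbitrary unions, $X \setminus \{x\} \in \mathcal G$, and hence $\{x\}$, being the complement of a $\mathcal G$-open set, is $\mathcal G$-closed.

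There is essentially no real obstacle here beyond bookkeeping: the argument invokes only the union axiom of a generalized topology, and in particular never uses the (possibly false) statement that $X$ itself is $\mathcal G$-open, so strongness of the space is not needed. The one point that deserves a moment's attention is that here "$\mathcal G$-closed" is defined directly as "complement of a $\mathcal G$-open set" rather than through a closure operator, which is precisely what makes the complement computation conclusive.
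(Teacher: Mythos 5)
Your proof is correct: the paper states this proposition without giving any proof, and your argument (writing $X\setminus\{x\}$ as the union of the $\mathcal G$-open sets $V_y$ supplied by the $T_2$ axiom, then using closure of $\mathcal G$ under arbitrary unions and the definition of $\mathcal G$-closed as complement of $\mathcal G$-open) is exactly the standard argument the authors evidently had in mind. Your remarks on the degenerate case $X=\{x\}$ and on not needing strongness are accurate.
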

By the last result, the identity $\{e'\}$ is $\mathcal G$-closed and hence we have the following corollary.

\begin{cor}
Let $\varphi:G \rightarrow G'$ be a $\mathcal G$-morphism, let $e'$ be the unit element of $G'$. Then 
$$
\ker (\varphi)
=\{g\in G | \, \varphi(g) = e'\}
$$
is a $ \mathcal G$-closed normal $\mathcal G$-topological subgroup in $G$.
\end{cor}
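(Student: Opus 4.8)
The plan is to check the two assertions separately: that $\ker(\varphi)$ is a normal $\mathcal G$-topological subgroup of $G$, and that it is $\mathcal G$-closed.

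First, purely algebraically, $\ker(\varphi)$ is a normal subgroup of $G$, since it is the kernel of a group homomorphism: it is a subgroup, and for $g\in G$, $k\in\ker(\varphi)$ one has $\varphi(gkg^{-1})=\varphi(g)\,e'\,\varphi(g)^{-1}=e'$, so $gkg^{-1}\in\ker(\varphi)$. Then I would invoke the earlier proposition asserting that any subgroup of a $\mathcal G$-topological group, equipped with the induced subspace $\mathcal G$-topology, is again a $\mathcal G$-topological group; applied to $H=\ker(\varphi)$ this gives that $\ker(\varphi)$ is a $\mathcal G$-topological subgroup of $G$, and by the above it is normal.

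Second, for $\mathcal G$-closedness I would use the hypothesis that $G'$ is $T_2$. By the proposition immediately preceding this corollary, the singleton $\{e'\}$ is a $\mathcal G$-closed set in $G'$. Now $\ker(\varphi)=\varphi^{-1}(\{e'\})$. Since $\{e'\}$ is $\mathcal G$-closed, its complement $G'\setminus\{e'\}$ is $\mathcal G$-open; as $\varphi$ is $\mathcal G$-continuous, $\varphi^{-1}\bigl(G'\setminus\{e'\}\bigr)=G\setminus\varphi^{-1}(\{e'\})=G\setminus\ker(\varphi)$ is $\mathcal G$-open in $G$, and therefore $\ker(\varphi)$ is $\mathcal G$-closed in $G$.

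The argument is essentially formal, so there is no serious obstacle; the only step worth care is the passage from $\mathcal G$-continuity (defined via preimages of $\mathcal G$-open sets) to the fact that preimages of $\mathcal G$-closed sets are $\mathcal G$-closed, which works because taking preimages commutes with complementation and ``$\mathcal G$-closed'' means exactly ``complement of a $\mathcal G$-open set''. One should note that the $T_2$ assumption built into the definition of a $\mathcal G$-topological group is precisely what makes $\{e'\}$ $\mathcal G$-closed, and hence is indispensable here.
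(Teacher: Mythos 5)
Your proof is correct and follows the same route the paper intends: the paper's only justification is the remark that, by the preceding proposition on $T_2$ spaces, $\{e'\}$ is $\mathcal G$-closed, so that $\ker(\varphi)=\varphi^{-1}(\{e'\})$ is $\mathcal G$-closed by $\mathcal G$-continuity, with normality and the subgroup structure handled exactly as you do via the algebraic kernel argument and the earlier proposition on $\mathcal G$-topological subgroups. Your added remark on why preimages of $\mathcal G$-closed sets are $\mathcal G$-closed is a useful explicit detail the paper leaves implicit.
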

From \cite{csaszar15} we have the following useful definition.
\begin{defn}
Let $X$ be a $\mathcal G$-topological space, and let $A\subset X$. We say that $x\in X$ is a point in $\mathcal G$-closure of $A$ if any $\mathcal G$-open set $U$ containing $x$ intersects $A.$  
\end{defn}

We will use the following result of \cite{Min11}:
\begin{lem}\label{murad1}
Let $(X, \mathcal G_X)$ and $(Y, \mathcal G_Y)$ be two $\mathcal G$-topological spaces. A function $f:X\rightarrow Y$ is $\mathcal G$-continuous $\Longleftrightarrow$  ${\cl}_{\mathcal G_X}(f^{-1}(B)) \subset  f^{-1}(\cl_{\mathcal G_Y}(B))$ for all $B \subset Y$ $\Longleftrightarrow$ $f(\cl_{\mathcal G_X}(A)) \subset  \cl_{\mathcal G_Y}(f(A)))$ for all $A \subset X$.
\end{lem}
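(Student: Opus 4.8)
\medskip
\noindent\textit{Sketch of a proof plan.} The plan is to establish the two stated equivalences by closing the cycle of implications: $\mathcal G$-continuity of $f$ $\Rightarrow$ condition $(1)$ $\Rightarrow$ condition $(2)$ $\Rightarrow$ $\mathcal G$-continuity of $f$, where $(1)$ abbreviates ``$\cl_{\mathcal G_X}(f^{-1}(B))\subset f^{-1}(\cl_{\mathcal G_Y}(B))$ for all $B\subset Y$'' and $(2)$ abbreviates ``$f(\cl_{\mathcal G_X}(A))\subset\cl_{\mathcal G_Y}(f(A))$ for all $A\subset X$''. Before starting I would record three elementary facts that get used repeatedly: (a) since arbitrary unions of $\mathcal G$-open sets are $\mathcal G$-open, arbitrary intersections of $\mathcal G$-closed sets are $\mathcal G$-closed, so $\cl_{\mathcal G}(S)$ is genuinely a $\mathcal G$-closed set for every $S$; (b) because $\emptyset$ always lies in $\mathcal G$, the whole space is always $\mathcal G$-closed, hence $\cl_{\mathcal G}(S)$ is an intersection over a nonempty family and contains $S$; (c) $\cl_{\mathcal G}$ is monotone and fixes every $\mathcal G$-closed set. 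No separation hypothesis is needed anywhere.

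For the first implication, assume $f$ is $\mathcal G$-continuous and fix $B\subset Y$. By (a) the set $\cl_{\mathcal G_Y}(B)$ is $\mathcal G$-closed, so $Y\setminus\cl_{\mathcal G_Y}(B)$ is $\mathcal G$-open; by $\mathcal G$-continuity its preimage $X\setminus f^{-1}(\cl_{\mathcal G_Y}(B))$ is $\mathcal G$-open, that is, $f^{-1}(\cl_{\mathcal G_Y}(B))$ is a $\mathcal G$-closed set containing $f^{-1}(B)$. Since $\cl_{\mathcal G_X}(f^{-1}(B))$ is the intersection of all $\mathcal G$-closed sets containing $f^{-1}(B)$, condition $(1)$ follows.

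For $(1)\Rightarrow(2)$, given $A\subset X$ I would apply $(1)$ with $B=f(A)$, then use $A\subset f^{-1}(f(A))$ together with monotonicity of closure to get $\cl_{\mathcal G_X}(A)\subset f^{-1}(\cl_{\mathcal G_Y}(f(A)))$, and finally apply $f$ and the identity $f(f^{-1}(T))\subset T$ to obtain $(2)$. For $(2)\Rightarrow$ $\mathcal G$-continuity, let $O\subset Y$ be $\mathcal G$-open, set $F=Y\setminus O$ and $A=f^{-1}(F)$. Then $f(A)\subset F$, so by $(2)$ and (c) we get $f(\cl_{\mathcal G_X}(A))\subset\cl_{\mathcal G_Y}(f(A))\subset\cl_{\mathcal G_Y}(F)=F$, whence $\cl_{\mathcal G_X}(A)\subset f^{-1}(F)=A$. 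Together with $A\subset\cl_{\mathcal G_X}(A)$ this shows $A$ is $\mathcal G$-closed, so $f^{-1}(O)=X\setminus A$ is $\mathcal G$-open, which is exactly $\mathcal G$-continuity.

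As for the main obstacle: there is really no deep step here, since the whole argument is a routine transcription of the classical closure characterizations of continuity. The only points requiring genuine care are (i) the preliminary remarks (a)--(b) --- in a generalized topology the whole space need not itself be $\mathcal G$-open, so one must verify that $\cl_{\mathcal G}$ is nonetheless well defined and produces $\mathcal G$-closed sets; and (ii) keeping the complement bookkeeping straight via $f^{-1}(Y\setminus T)=X\setminus f^{-1}(T)$, $f(f^{-1}(T))\subset T$ and $A\subset f^{-1}(f(A))$. Everything else reduces to monotonicity of the closure operator and the fact that it fixes $\mathcal G$-closed sets.
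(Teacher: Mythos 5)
Your argument is correct, and it is worth noting that the paper itself gives no proof of this lemma at all: it is quoted as a known result from Min's paper (reference \cite{Min11}), so there is no internal argument to compare yours against. What you supply is the standard closure-operator cycle (continuity $\Rightarrow$ preimage condition $\Rightarrow$ image condition $\Rightarrow$ continuity), and each step goes through verbatim in the generalized setting. The one place where a generalized topology could have caused trouble is exactly the one you flag: $X$ need not be $\mathcal G$-open, but since $\emptyset\in\mathcal G$ the whole space is always $\mathcal G$-closed, so the family of $\mathcal G$-closed sets containing a given set is nonempty, and arbitrary intersections of $\mathcal G$-closed sets are $\mathcal G$-closed because arbitrary unions of $\mathcal G$-open sets are $\mathcal G$-open; hence $\cl_{\mathcal G}$ is well defined, monotone, produces $\mathcal G$-closed sets, and fixes them. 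With that in hand, your three implications (preimage of the closed set $\cl_{\mathcal G_Y}(B)$ is closed and contains $f^{-1}(B)$; the substitution $B=f(A)$ together with $A\subset f^{-1}(f(A))$ and $f(f^{-1}(T))\subset T$; and the argument that $A=f^{-1}(Y\setminus O)$ equals its own closure, hence is $\mathcal G$-closed, so $f^{-1}(O)$ is $\mathcal G$-open) are all sound, and no separation axiom or strongness assumption is needed. In short: a correct, self-contained proof of a statement the paper merely imports.
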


We are ready to prove the following basic result.
\begin{thm}
Let $G$ be a $ \mathcal G$-topological group. Then 
$\mathcal G$-closure of any (normal) subgroup of $G$ is a $ \mathcal G$-topological (normal) subgroup again. 
\end{thm}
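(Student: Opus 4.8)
The plan is to show that $\overline H := \cl_{\mathcal G}(H)$ is again a subgroup by checking it contains the identity and is closed under products and inverses, and then to quote the earlier proposition that every subgroup of a $\mathcal G$-topological group is a $\mathcal G$-topological subgroup. Since the $\mathcal G$-closure of a set always contains that set (the ambient space is $\mathcal G$-closed), one gets $e_G\in H\subseteq\overline H$ for free.

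For closure under inversion I would use the inverse map $\iota\colon G\to G$, $\iota(x)=x^{-1}$, which is $\mathcal G$-continuous by the definition of a $\mathcal G$-topological group. By Lemma \ref{murad1}, $\iota(\cl_{\mathcal G}(H))\subseteq\cl_{\mathcal G}(\iota(H))$, and since $H$ is a subgroup $\iota(H)=H$, so $(\overline H)^{-1}\subseteq\overline H$. For closure under multiplication I would apply Lemma \ref{murad1} to the $\mathcal G$-continuous multiplication $m\colon G\times G\to G$ with $A=H\times H$, obtaining $m\bigl(\cl_{\mathcal G\times\mathcal G}(H\times H)\bigr)\subseteq\cl_{\mathcal G}(m(H\times H))=\cl_{\mathcal G}(HH)=\overline H$. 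The step that needs the most care is the inclusion $\overline H\times\overline H\subseteq\cl_{\mathcal G\times\mathcal G}(H\times H)$ in the product $\mathcal G$-topology: given $(x,y)$ on the left and a product-$\mathcal G$-open set $W\ni(x,y)$, I would write $W$ as a union of basic rectangles $U\times V$, so that $(x,y)$ lies in one of them, say $U_0\times V_0\subseteq W$; then $x\in\overline H$ forces $U_0\cap H\neq\emptyset$ and $y\in\overline H$ forces $V_0\cap H\neq\emptyset$, whence $(U_0\times V_0)\cap(H\times H)\neq\emptyset$ and therefore $W\cap(H\times H)\neq\emptyset$. Combining the two inclusions gives $\overline H\cdot\overline H=m(\overline H\times\overline H)\subseteq\overline H$, so $\overline H$ is a subgroup, and the earlier proposition then makes it a $\mathcal G$-topological subgroup of $G$.

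For the normal case, fix $g\in G$ and consider the conjugation $c_g\colon x\mapsto gxg^{-1}$. Being the composite $R_{g^{-1}}\circ L_g$ of two translation $\mathcal G$-homeomorphisms, $c_g$ is itself a $\mathcal G$-homeomorphism (as observed in the proof of Theorem \ref{murad4}), and a $\mathcal G$-homeomorphism carries $\mathcal G$-closures onto $\mathcal G$-closures; hence $c_g(\overline H)=\cl_{\mathcal G}(c_g(H))=\cl_{\mathcal G}(gHg^{-1})=\cl_{\mathcal G}(H)=\overline H$, i.e. $g\overline H g^{-1}=\overline H$ for every $g$, so $\overline H$ is normal whenever $H$ is. I expect the only genuine subtlety to be the product-$\mathcal G$-topology closure inclusion above: it goes through precisely because every $\mathcal G$-open set is a union of basic rectangles and arbitrary unions of $\mathcal G$-open sets are $\mathcal G$-open, so membership of $(x,y)$ in $W$ is already witnessed by a single basic rectangle; once that is in place, everything else is a direct application of Lemma \ref{murad1} together with the homeomorphism properties of translations and conjugations established earlier.
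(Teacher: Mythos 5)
Your proposal is correct and follows essentially the same route as the paper: closure under inversion and the normal case via Lemma \ref{murad1} (with conjugation $L_g\circ R_{g^{-1}}$ a $\mathcal G$-homeomorphism), and closure under multiplication via $\mathcal G$-continuity of $m$ on the product. The only cosmetic difference is in the multiplication step, where you apply Lemma \ref{murad1} to $m$ and prove $\overline H\times\overline H\subseteq\cl_{\mathcal G\times\mathcal G}(H\times H)$ explicitly, while the paper argues pointwise (a basic rectangle $V\times W\ni(x,y)$ inside $m^{-1}(U)$ meeting $H\times H$) --- the underlying basic-rectangle argument is identical.
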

\begin{proof}
Let $H$ be a subgroup $G$. First we prove that ${\cl}_{\mathcal G}(H)$ is closed under multiplication $m$ in $G$. Given $x,y\in {\cl}_{\mathcal G}(H)$ and for any $\mathcal G$-open set $U$ containing $xy$, by last definition, we need show that 
$U \cap H \neq \emptyset$. Since $m:G\times G \rightarrow  G$ is $\mathcal G$-continuous, there exist $\mathcal G$-open sets $V$ and $W$ containing $x $ and $y$ respectively such that $m(V\times W) \subset U$. Since $x,y \in {\cl}_{\mathcal G}(H)$ then we have
$$
V\cap H \neq \emptyset \quad \text{and} \quad W\cap H \neq \emptyset.
$$
Hence $\emptyset \neq m(V\times W)\cap H \subset U\cap H$ which implies that $xy\in  {\cl}_{\mathcal G}(H)$.\\
Now ${\cl}_{\mathcal G}(H)$ is closed under the inverse operation because 
$(\cl_{\mathcal G}(H))^{-1} \subset 
\cl_{\mathcal G}(H^{-1})=\cl_{\mathcal G}(H)$ by Lemma \ref{murad1}.\\
Now suppose $H$ is normal subgroup in $G$. Given $g \in G$, let $\gamma_g:G \rightarrow G$ be conjugation by $g$. i.e. $\gamma_g (h) =ghg^{-1} = L_g \circ R_{g^{-1}} (h)$. Then $\gamma_g$ is a $\mathcal G$-homeomorphism from $G$ to $G$ itself. By Lemma \ref{murad1}, we have 
$$
\gamma_g (\cl_{\mathcal G}(H)) \subset \cl_{\mathcal G}(\gamma_g (H)) = \cl_{\mathcal G}(H), \quad \forall g\in G.
$$ 
It implies 
$$
\gamma_g (\cl_{\mathcal G}(H)) = g \cl_{\mathcal G}(H)) g^{-1} \subset \cl_{\mathcal G}(H)),  \quad \forall g\in G.
$$
Hence $\cl_{\mathcal G}(H))$ is a normal subgroup of $G$.
\end{proof}
\begin{thm}\label{murad6}
Let $G$ be a $\mathcal G$-topological group, $U$ a $\mathcal G$-open subset of $G$, and $A$ any subset of $G$. Then the set $AU$(respectively, $UA$) is $\mathcal G$-open in $G$.
\end{thm}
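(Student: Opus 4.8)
The plan is to reduce the statement to the fact, already established, that every left translation $L_g$ and every right translation $R_g$ is a $\mathcal G$-homeomorphism of $G$, together with the defining axiom of a generalized topology that arbitrary unions of $\mathcal G$-open sets are $\mathcal G$-open.

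First I would treat the trivial case $A=\emptyset$: then $AU=\emptyset$, which is $\mathcal G$-open by the definition of a generalized topology, so there is nothing to prove. Assume now $A\neq\emptyset$ and write
$$
AU=\bigcup_{a\in A} aU = \bigcup_{a\in A} L_a(U).
$$
The key step is to observe that for each $a\in A$ the set $L_a(U)$ is $\mathcal G$-open. Indeed, $L_a\colon G\to G$ is a $\mathcal G$-homeomorphism with inverse $L_{a^{-1}}$, which is $\mathcal G$-continuous; hence $L_a(U)=(L_{a^{-1}})^{-1}(U)$ is the preimage of the $\mathcal G$-open set $U$ under the $\mathcal G$-continuous map $L_{a^{-1}}$, and is therefore $\mathcal G$-open. (Equivalently, a $\mathcal G$-homeomorphism is in particular a $\mathcal G$-open map.) Thus $AU$ is a union of $\mathcal G$-open subsets of $G$, and by the defining property of a generalized topology $AU$ is $\mathcal G$-open.

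The argument for $UA$ is identical, using right translations: $UA=\bigcup_{a\in A}Ua=\bigcup_{a\in A}R_a(U)$, each $R_a$ is a $\mathcal G$-homeomorphism with $\mathcal G$-continuous inverse $R_{a^{-1}}$, so each $R_a(U)=(R_{a^{-1}})^{-1}(U)$ is $\mathcal G$-open, and the union is $\mathcal G$-open. I do not expect any genuine obstacle here; the only point requiring a word of care is the passage from "$L_a$ is a $\mathcal G$-homeomorphism" to "$L_a$ carries $\mathcal G$-open sets to $\mathcal G$-open sets", which is exactly the $\mathcal G$-continuity of $L_{a^{-1}}$ supplied by the earlier translation theorem.
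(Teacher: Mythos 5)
Your proof is correct and follows essentially the same route as the paper: write $AU=\bigcup_{a\in A}L_a(U)$, use that each translation is a $\mathcal G$-homeomorphism so $L_a(U)$ is $\mathcal G$-open, and conclude by closure of $\mathcal G$-open sets under arbitrary unions. You merely spell out the details (the empty-set case and the identification $L_a(U)=(L_{a^{-1}})^{-1}(U)$) that the paper leaves implicit.
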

\begin{proof}
We know that every left translation of $G$ is a homeomorphism. Since, $AU = \bigcup_{a\in A} L_a(U)$, the conclusion follows. A similar argument follows for $UA$.
\end{proof}
\begin{thm}\label{murad5}
Let $G$ be a $\mathcal G$-topological group. Then for every subset $A$ of $G$ and every $\mathcal G$-open set $U$ containing the identity element $e_G$, $A\subset AU$($A \subset UA$).
\end{thm}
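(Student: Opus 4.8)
The plan is, first, to observe that the statement exactly as written is essentially immediate: if $a\in A$ then, since $e_G\in U$, we have $a=a\cdot e_G\in AU$, so $A\subseteq AU$; dually $a=e_G\cdot a\in UA$, so $A\subseteq UA$. This uses nothing about the $\mathcal G$-topology or the $\mathcal G$-continuity of the group operations beyond the bare fact that $U$ contains the identity, so in this form there is no real content and no obstacle — one writes the two displayed inclusions and is done.

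The version of this fact that carries weight in applications replaces $A$ by its $\mathcal G$-closure: for every $\mathcal G$-open $U$ with $e_G\in U$ one should have $\cl_{\mathcal G}(A)\subseteq AU$ and $\cl_{\mathcal G}(A)\subseteq UA$ (this is what one wants in order to recover a $\mathcal G$-closure as an intersection of such translated sets, in the spirit of Theorem~\ref{murad6}). I would prove this stronger statement as follows.

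Fix $x\in\cl_{\mathcal G}(A)$. Writing $j: G\to G$ for the inverse map $y\mapsto y^{-1}$, one has $j^{-1}(U)=\{\,y\in G: y^{-1}\in U\,\}=U^{-1}$; since $j$ is $\mathcal G$-continuous and $U$ is $\mathcal G$-open, $U^{-1}$ is a $\mathcal G$-open set containing $e_G$. By the translation theorem, $L_x$ is a $\mathcal G$-homeomorphism of $G$, so $xU^{-1}=L_x(U^{-1})$ is a $\mathcal G$-open set containing $x$. By the definition of a point of the $\mathcal G$-closure, $xU^{-1}\cap A\neq\emptyset$; choosing $a$ in this intersection and writing $a=xv^{-1}$ with $v\in U$ gives $x=av\in AU$. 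As $x\in\cl_{\mathcal G}(A)$ was arbitrary, $\cl_{\mathcal G}(A)\subseteq AU$. The inclusion $\cl_{\mathcal G}(A)\subseteq UA$ is obtained in the same way, replacing $L_x$ by $R_x$ and $xU^{-1}$ by $U^{-1}x$.

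The only step needing genuine care — hence the main (mild) obstacle — is verifying that $U^{-1}$, and then $xU^{-1}$, really is $\mathcal G$-open: this is exactly where the $\mathcal G$-continuity of inversion and the translation theorem must be invoked explicitly. In a classical topological group both facts are routine, but in the generalized setting they are precisely the defining axioms of a $\mathcal G$-topological group, so they have to be cited rather than taken for granted. Everything else is a one-line set-theoretic manipulation, and for the literal statement $A\subseteq AU$ there is nothing to do at all.
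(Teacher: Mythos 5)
Your proof is correct and is essentially the paper's own argument: the paper's proof likewise establishes the stronger inclusion $\cl_{\mathcal G}(A)\subset AU$ (the theorem statement evidently carries a typo, since the next theorem invokes it in exactly that closed form), by choosing a $\mathcal G$-open $V$ containing $e_G$ with $V^{-1}\subset U$, intersecting the $\mathcal G$-open set $xV$ with $A$, and writing $x=ab^{-1}\in AV^{-1}\subset AU$ --- which is your argument with $V=U^{-1}$. Nothing further is needed.
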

\begin{proof}
Since the inverse is $\mathcal G$-continuous, we can find a $\mathcal G$-open set $V$ containing $e-G$ such that $V^{-1}\subset U$. Take $x \in \cl_{\mathcal G}(A) $. Then $xV$ is a $\mathcal G$-open set containing $x$, therefore there is $a \in A \cap xV$, that is, $a=xb$, for some $b\in V$. Then $a=ab^{-1}\in AV^{-1}\subset AU$, hence, $\cl_{\mathcal G}(A) \subset AU$.
\end{proof}
\begin{thm}
Let $G$ be a $\mathcal G$-topological group, and $\mathcal B_e$ a base of the space $G$ at the identity element $e_G$. Then for every subset $A$ of $G$, $$\cl_{\mathcal G}(A)=\bigcap \{AU:U\in \mathcal B_e \}.$$
\end{thm}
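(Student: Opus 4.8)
The plan is to prove the two inclusions separately, the easy one coming straight from Theorem \ref{murad5} and the harder one from the translation description of the base at a point together with Theorem \ref{murad4}(ii).

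First I would dispose of $\cl_{\mathcal G}(A)\subset\bigcap\{AU:U\in\mathcal B_e\}$. Every $U\in\mathcal B_e$ is a $\mathcal G$-open set containing $e_G$, so the proof of Theorem \ref{murad5} (which actually establishes $\cl_{\mathcal G}(A)\subset AU$, not merely $A\subset AU$) applies and gives $\cl_{\mathcal G}(A)\subset AU$ for each such $U$. Taking the intersection over all $U\in\mathcal B_e$ yields the inclusion. This step is essentially a citation and should take one line.

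For the reverse inclusion $\bigcap\{AU:U\in\mathcal B_e\}\subset\cl_{\mathcal G}(A)$, I would fix a point $x$ in the left-hand set and verify the defining property of $\mathcal G$-closure: every $\mathcal G$-open set $O$ with $x\in O$ must meet $A$. By the theorem identifying the base at $g$ as $\mathcal B_g=\{gO:O\in\mathcal B_e\}$, there is $U\in\mathcal B_e$ with $x\in xU\subset O$. Next, using Theorem \ref{murad4}(ii), choose $V\in\mathcal B_e$ with $V^{-1}\subset U$. Since $x$ lies in the intersection and $V\in\mathcal B_e$, we have $x\in AV$, so $x=av$ for some $a\in A$, $v\in V$; then $a=xv^{-1}\in xV^{-1}\subset xU\subset O$, so $a\in A\cap O$ and $O\cap A\neq\emptyset$. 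As $O$ was an arbitrary $\mathcal G$-open neighbourhood of $x$, this shows $x\in\cl_{\mathcal G}(A)$.

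The main obstacle is the reverse inclusion, and within it the genuine point is the passage from an arbitrary $\mathcal G$-open neighbourhood of $x$ to a set of the form $xU$ with $U\in\mathcal B_e$ (so that the hypothesis ``$x\in AU$'' can be brought to bear) and the handling of the asymmetry between ``$x=av$'' and the desired ``$a\in xU$'', which is exactly what the choice $V^{-1}\subset U$ fixes. Everything else is routine; one only needs to note in passing that $\mathcal B_e$ is a base at $e_G$ so that every $U\in\mathcal B_e$ indeed contains $e_G$, which is what makes the first inclusion legitimate.
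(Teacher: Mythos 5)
Your proposal is correct and takes essentially the same approach as the paper: one inclusion is obtained by citing Theorem \ref{murad5} (whose proof indeed establishes $\cl_{\mathcal G}(A)\subset AU$ for every $\mathcal G$-open $U$ containing $e_G$), and the other by translating a neighbourhood of $x$ to a basic neighbourhood of the identity and choosing a basic set whose inverse it contains, so that the trick $a=xv^{-1}\in xV^{-1}$ applies. The only cosmetic difference is that you argue the second inclusion directly, whereas the paper argues the contrapositive ($x\notin\cl_{\mathcal G}(A)$ yields $U\in\mathcal B_e$ with $x\notin AU$); the key step is identical.
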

\begin{proof}
In view of the theorem \ref{murad5}, we only have to verify that if $x$ is not in $\cl_{\mathcal G}(A)$, then there exists $U \in \mathcal B_e$, such that $x \notin AU$. Since $x \notin \cl_{\mathcal G}(A)$, then there exists a $\mathcal G$-open $W$ of $e_G$ such that $(xW) \cap A= \emptyset$. Take $U$ in $\mathcal B_e$, satisfying teh condition $U^{-1} \subset W$. Then $(xU^{-1}) \cap A = \emptyset$, which obviously implies that $AU$ does not conatin $x$.
\end{proof}
\begin{thm}
Let $G$ be a $\mathcal G$-topological group. Then $\mathcal G$-closure of a symmetric subset $A$ of $G$ is again symmetric.
\end{thm}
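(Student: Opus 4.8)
The plan is to exploit the fact that the inverse map $\iota\colon G\to G$, $\iota(x)=x^{-1}$, is a $\mathcal G$-homeomorphism of $G$ onto itself: it is $\mathcal G$-continuous by the very definition of a $\mathcal G$-topological group, and since $\iota\circ\iota=\Id_G$ it coincides with its own inverse, so $\iota^{-1}=\iota$ is $\mathcal G$-continuous as well. Thus $\iota$ is a $\mathcal G$-homeomorphism, and in particular the results of Lemma \ref{murad1} apply to it.

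First I would apply Lemma \ref{murad1} to $f=\iota$ and to the set $A\subset G$: from the $\mathcal G$-continuity of $\iota$ we get $\iota(\cl_{\mathcal G}(A))\subset\cl_{\mathcal G}(\iota(A))$, which written out is $(\cl_{\mathcal G}(A))^{-1}\subset\cl_{\mathcal G}(A^{-1})$. Now invoke the hypothesis $A=A^{-1}$ to replace the right-hand side by $\cl_{\mathcal G}(A)$, obtaining the inclusion $(\cl_{\mathcal G}(A))^{-1}\subset\cl_{\mathcal G}(A)$.

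Next I would reverse this inclusion. One clean way is to apply the bijection $\iota$ to both sides of the inclusion just obtained and use $\iota\circ\iota=\Id_G$, so that $\cl_{\mathcal G}(A)=\iota(\iota(\cl_{\mathcal G}(A)))\subset\iota(\cl_{\mathcal G}(A))=(\cl_{\mathcal G}(A))^{-1}$; alternatively one may simply rerun the previous paragraph with $A$ replaced by $A^{-1}$. Either way, combining the two inclusions gives $(\cl_{\mathcal G}(A))^{-1}=\cl_{\mathcal G}(A)$, i.e. $\cl_{\mathcal G}(A)$ is symmetric, as desired.

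There is no substantial obstacle in this argument: everything reduces to one application of Lemma \ref{murad1} together with the trivial identity $(B^{-1})^{-1}=B$. The only point deserving a moment's attention is the observation that the inverse map is a genuine $\mathcal G$-homeomorphism rather than merely $\mathcal G$-continuous, since it is this self-inverse property that licenses the reversal of the inclusion; but this follows at once from $\iota=\iota^{-1}$.
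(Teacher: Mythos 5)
Your proof is correct and follows essentially the same route as the paper: both rest on the fact that inversion is a $\mathcal G$-homeomorphism, so that $(\cl_{\mathcal G}(A))^{-1}=\cl_{\mathcal G}(A^{-1})=\cl_{\mathcal G}(A)$. The only difference is that you carefully derive the commutation of closure with inversion as two inclusions via Lemma \ref{murad1}, a step the paper simply asserts from the homeomorphism property.
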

\begin{proof}
Since inverse mapping is a $\mathcal G$-homeomorphism, $\cl_{\mathcal G}(A^{-1})=(\cl_{\mathcal G}(A))^{-1} $ and $A$ is symmetric, $\cl_{\mathcal G}(A)=(\cl_{\mathcal G}(A))^{-1}$. Hence $\cl_{\mathcal G}(A)$ is symmetric.
\end{proof}
\maketitle

\section{$ \mathcal G$-connectedness for $\mathcal G$-Topological Groups}
In this section, we will discuss basic topological properties of connectivity of $\mathcal G$- topologicl spaces from \cite{csaszar14}.
\begin{defn}
Let $X$ be a $\mathcal G$-topological space and let $U,V \subset X$. Then we say that the pair $U,V$ is {\bf $\mathcal G$-separated} if ${\cl}_{\mathcal G}(U) \cap V = {\cl}_{\mathcal G}(V) \cap U =  \emptyset$.

Let $X$ be a $\mathcal G$-topological space. Then $X$ is {\bf $\mathcal G$-connected} if it can not be written as a union of two $\mathcal G$-separated sets.  
\end{defn}
\begin{cor}
A $\mathcal G$-topological group $G$, having an $\mathcal G$-open subgroup, is not $\mathcal G$-connected. 
\end{cor}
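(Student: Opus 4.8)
The plan is to deduce this at once from Theorem~\ref{murad7}, which says that an $\mathcal G$-open subgroup is automatically $\mathcal G$-closed. Let $H$ be an $\mathcal G$-open subgroup of $G$; note that one really needs $H$ to be \emph{proper} here, so the statement should be read with that hypothesis (if $H=G$ there are no two pieces to separate, and the conclusion can fail — e.g. when $G$ itself is $\mathcal G$-connected). By Theorem~\ref{murad7}, $H$ is simultaneously $\mathcal G$-open and $\mathcal G$-closed. Set $K=G\setminus H$. Then $K$, being the complement of an $\mathcal G$-open set, is $\mathcal G$-closed, and being the complement of an $\mathcal G$-closed set, is $\mathcal G$-open; so $K$ is clopen as well.

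Next I would verify that $G=H\cup K$ displays $G$ as a union of two nonempty $\mathcal G$-separated sets. Since $H$ is $\mathcal G$-closed, $\cl_{\mathcal G}(H)=H$, and since $K$ is $\mathcal G$-closed, $\cl_{\mathcal G}(K)=K$ (the $\mathcal G$-closure of a $\mathcal G$-closed set equals the set, because it is among the $\mathcal G$-closed sets used in the defining intersection). As $H\cap K=\emptyset$, this yields $\cl_{\mathcal G}(H)\cap K=\emptyset=\cl_{\mathcal G}(K)\cap H$, which is precisely the condition that the pair $H,K$ be $\mathcal G$-separated. Finally $H\neq\emptyset$ because $e_G\in H$, and $K\neq\emptyset$ because $H$ is a proper subgroup. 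Hence $G$ is a union of two nonempty $\mathcal G$-separated sets, so by definition $G$ is not $\mathcal G$-connected.

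There is essentially no analytic difficulty in this argument; the only points that require care are (a) invoking Theorem~\ref{murad7} to get that $H$, and therefore its complement $K$, is $\mathcal G$-closed — rather than trying to prove closedness of $K$ from scratch — and (b) recording the properness of $H$ so that both parts of the partition are nonempty. Alternatively, one can bypass Theorem~\ref{murad7} and work directly with the coset decomposition $G=H\cup\bigl(\bigcup_{g\notin H}gH\bigr)$, using that each left coset $gH$ is $\mathcal G$-open because left translations are $\mathcal G$-homeomorphisms, so that $H$ and $G\setminus H$ are complementary $\mathcal G$-open sets; but the route through Theorem~\ref{murad7} is the most economical.
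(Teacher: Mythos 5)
Your argument is correct and is essentially the one the paper intends: the corollary is stated without proof precisely because it follows at once from Theorem~\ref{murad7}, an $\mathcal G$-open subgroup $H$ being also $\mathcal G$-closed, so that $H$ and $G\setminus H$ are $\mathcal G$-separated and witness non-connectedness. Your added remark that $H$ must be a \emph{proper} (nonempty complement) subgroup is a sensible reading of the statement, since otherwise the decomposition degenerates and the conclusion can fail.
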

The connectedness for $\mathcal G$-topological subspaces can be defined by induced subspace $\mathcal G$-topology. From \cite{csaszar14}, we give the relation between $\mathcal G$-continuity and $\mathcal G$-connectedness.
\begin{thm}\label{murad3}
Let $f:X\rightarrow Y$ be a $\mathcal G$-continuous function between $\mathcal G$-topological spaces. If $X$ is $\mathcal G$-connected, so is $f(X)$.
\end{thm}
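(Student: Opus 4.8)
The plan is to argue by contradiction, pulling back a $\mathcal G$-separation of $f(X)$ to a $\mathcal G$-separation of $X$ with the help of Lemma \ref{murad1}. First I would reduce to the case in which $f$ is onto: replacing the codomain $Y$ by $f(X)$ equipped with the subspace $\mathcal G$-topology $\mathcal G_{f(X)}$, the corestricted map $f\colon X\to f(X)$ is still $\mathcal G$-continuous, since for $O\in\mathcal G_Y$ one has $f^{-1}(O\cap f(X))=f^{-1}(O)\cap X=f^{-1}(O)$, which is $\mathcal G$-open in $X$. So from now on we may assume $Y=f(X)$ and $f$ surjective, and it suffices to prove that $Y$ is $\mathcal G$-connected.

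Suppose it is not. Then $Y=U\cup V$ for some $\mathcal G$-separated pair $U,V$, i.e. $\cl_{\mathcal G_Y}(U)\cap V=\cl_{\mathcal G_Y}(V)\cap U=\emptyset$; in particular $U\cap V=\emptyset$ and both sets are nonempty. Put $P=f^{-1}(U)$ and $Q=f^{-1}(V)$. Then $P\cup Q=f^{-1}(Y)=X$, the sets $P$ and $Q$ are disjoint, and both are nonempty because $f$ is onto.

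The core of the argument is to verify that $P$ and $Q$ form a $\mathcal G$-separated pair in $X$. By Lemma \ref{murad1}, $\mathcal G$-continuity of $f$ gives $f(\cl_{\mathcal G_X}(P))\subset\cl_{\mathcal G_Y}(f(P))\subset\cl_{\mathcal G_Y}(U)$, using $f(P)=f(f^{-1}(U))\subset U$. Hence if some $z\in\cl_{\mathcal G_X}(P)\cap Q$ existed, then $f(z)\in\cl_{\mathcal G_Y}(U)$ while also $f(z)\in f(Q)\subset V$, contradicting $\cl_{\mathcal G_Y}(U)\cap V=\emptyset$; therefore $\cl_{\mathcal G_X}(P)\cap Q=\emptyset$. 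The symmetric computation, interchanging the roles of $P,Q$ and of $U,V$, yields $\cl_{\mathcal G_X}(Q)\cap P=\emptyset$. Thus $X=P\cup Q$ is a union of two nonempty $\mathcal G$-separated sets, contradicting the $\mathcal G$-connectedness of $X$. Consequently $f(X)$ is $\mathcal G$-connected.

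The only real point of care — the ``main obstacle'', such as it is — is the reduction to the surjective case and keeping track of which $\mathcal G$-topology each closure is computed in; once $f$ is taken onto its image, Lemma \ref{murad1} does essentially all of the work and the rest is purely formal set-theoretic manipulation.
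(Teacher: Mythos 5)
Your argument is correct, but note that for this particular theorem the paper offers no proof of its own: it is quoted from Cs\'asz\'ar's paper on $\gamma$-connected sets (\cite{csaszar14}), so there is no internal proof to compare against. Your route is the natural one and it works: corestrict $f$ onto its image with the subspace $\mathcal G$-topology (your verification that $f^{-1}(O\cap f(X))=f^{-1}(O)$ is exactly what is needed, and it also keeps all closures on the codomain side computed in $\mathcal G_{f(X)}$, which matters since the paper defines connectedness of $f(X)$ via the induced subspace structure), then pull a separation $U,V$ of $f(X)$ back to $P=f^{-1}(U)$, $Q=f^{-1}(V)$ and transfer the closure conditions through Lemma \ref{murad1} together with monotonicity of $\cl_{\mathcal G}$. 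One small point of bookkeeping: nonemptiness of $U$ and $V$ does not follow ``in particular'' from the $\mathcal G$-separation condition as the paper states it; it is part of the (standard Cs\'asz\'ar) definition of a disconnection, which the paper's wording leaves implicit. You do need it, since nonemptiness of $P$ and $Q$ comes from surjectivity plus $U,V\neq\emptyset$, so it is better to invoke it as part of the definition rather than as a consequence. With that attribution fixed, the proof is complete and, in fact, supplies a detail the paper leaves to the reference.
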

\begin{defn}
A function $f:X\rightarrow Y$ between $\mathcal G$-topological spaces is called {\bf $\mathcal G$-open} if for every $\mathcal G$-open set $U \subset X$, $f(U)$ is $\mathcal G$-open in $Y$.
\end{defn}
\begin{thm}
Let $f:X\rightarrow Y$ be a $\mathcal G$-open and injective function between $\mathcal G$-topological spaces and let $S\subset X$. If $f(S)$ is $\mathcal G$-connected, so is $S$.
\end{thm}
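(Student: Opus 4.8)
The plan is to argue by contradiction, transporting a separation of $S$ forward to a separation of $f(S)$ and then invoking the hypothesis that $f(S)$ is $\mathcal G$-connected. So suppose $S$ is not $\mathcal G$-connected; then in the subspace $\mathcal G$-topology on $S$ we may write $S = U \cup V$ with $U,V$ nonempty and $\mathcal G$-separated, i.e. $\cl_{\mathcal G}(U)\cap V = \cl_{\mathcal G}(V)\cap U = \emptyset$, all closures being taken in $S$. Since $f$ is a function, $f(S) = f(U)\cup f(V)$, and $f(U),f(V)$ are nonempty. It then suffices to show that the pair $f(U),f(V)$ is $\mathcal G$-separated in the subspace $f(S)$; by symmetry it is enough to prove $\cl_{\mathcal G}(f(U))\cap f(V) = \emptyset$.

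First I would unwind the subspace $\mathcal G$-topologies. A $\mathcal G$-open set of $S$ has the form $W\cap S$ with $W$ $\mathcal G$-open in $X$, and since $U\subset S$ we have $(W\cap S)\cap U = W\cap U$; likewise a $\mathcal G$-open set of $f(S)$ has the form $O\cap f(S)$ with $O$ $\mathcal G$-open in $Y$, and $(O\cap f(S))\cap f(U) = O\cap f(U)$ because $f(U)\subset f(S)$. Thus membership of a point of $f(S)$ in $\cl_{\mathcal G}(f(U))$ is equivalent to saying that every $\mathcal G$-open set of $Y$ containing that point meets $f(U)$.

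Now suppose, for contradiction, that some $y$ lies in $\cl_{\mathcal G}(f(U))\cap f(V)$. Write $y = f(v)$ with $v\in V$. Since $\cl_{\mathcal G}(U)\cap V = \emptyset$ we have $v\notin\cl_{\mathcal G}(U)$, so there is a $\mathcal G$-open set $W$ of $X$ with $v\in W$ and $W\cap U = \emptyset$. Because $f$ is $\mathcal G$-open, $f(W)$ is $\mathcal G$-open in $Y$ and contains $y=f(v)$. Here injectivity of $f$ enters: if $f(W)$ met $f(U)$, say $f(w)=f(u)$ with $w\in W$ and $u\in U$, then $w = u\in W\cap U$, contradicting $W\cap U=\emptyset$; hence $f(W)\cap f(U)=\emptyset$. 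So $f(W)$ is a $\mathcal G$-open set of $Y$ containing $y$ and missing $f(U)$, contradicting $y\in\cl_{\mathcal G}(f(U))$. Therefore $\cl_{\mathcal G}(f(U))\cap f(V)=\emptyset$, and symmetrically $\cl_{\mathcal G}(f(V))\cap f(U)=\emptyset$, so $f(S)=f(U)\cup f(V)$ exhibits $f(S)$ as a union of two nonempty $\mathcal G$-separated sets, contradicting its $\mathcal G$-connectedness. Hence $S$ is $\mathcal G$-connected.

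The only point that needs care is the bookkeeping among the $\mathcal G$-topologies on $X$, $Y$, and the subspaces $S$ and $f(S)$, together with making sure that injectivity — not merely openness — is used precisely at the step separating $f(W)$ from $f(U)$; the rest is formal.
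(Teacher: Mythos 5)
Your argument is correct, and it is worth noting that the paper itself states this theorem without proof, so there is no ``official'' argument to match; your direct proof by transporting a separation forward is a legitimate route. You handle the two delicate points properly: the subspace bookkeeping (using that for $U\subset S$ one has $(W\cap S)\cap U=W\cap U$, so closures in the subspace can be tested against $\mathcal G$-open sets of the ambient space), and the precise place where injectivity is needed, namely to conclude $f(W)\cap f(U)=\emptyset$ from $W\cap U=\emptyset$ --- without injectivity this step genuinely fails, so your emphasis there is well placed. The only small caveat is that the paper's definition of $\mathcal G$-separated does not explicitly say ``nonempty,'' but your reading (nonempty pieces, as in Cs\'asz\'ar's notion of $\gamma$-connectedness) is the standard and intended one, and you do verify that $f(U)$ and $f(V)$ are nonempty. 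An alternative, shorter derivation in the spirit of the surrounding results would be to observe that since $f$ is injective and $\mathcal G$-open, the inverse map $f^{-1}\colon f(X)\rightarrow X$ is $\mathcal G$-continuous (the preimage of a $\mathcal G$-open $W\subset X$ under $f^{-1}$ is $f(W)\cap f(X)$, which is $\mathcal G$-open in the subspace), and then $S=f^{-1}(f(S))$ is the $\mathcal G$-continuous image of the $\mathcal G$-connected set $f(S)$, so Theorem \ref{murad3} applies; your proof essentially unwinds this composition by hand, which costs a bit more writing but has the advantage of being self-contained and of making explicit exactly where openness and injectivity enter.
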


Now we will give definition of $\mathcal G$-component in $\mathcal G$-topological space.
\begin{defn}
Let $X$ be a $\mathcal G$-topological space, let $A\subset X$. For $x\in A$, the set 
$$
A_x =\bigcup_{x\in S\subset A}S,
$$
where $S$ is $\mathcal G$-connected in $A$, is called {\bf $\mathcal G$-component of $A$ containing $x$}.
\end{defn}
From \cite{csaszar14} we have the following theorem.
\begin{thm}\label{murad2}
The maximal component of a $\mathcal G$-closed set $A$ in $\mathcal G$-topological space $X$ is again $\mathcal G$-closed. 
\end{thm}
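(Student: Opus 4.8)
The plan is to reduce the statement to one fact---\emph{the $\mathcal G$-closure of a $\mathcal G$-connected set is again $\mathcal G$-connected}---and then invoke the maximality built into the definition of a $\mathcal G$-component. Fix $x\in A$ and let $C=A_x=\bigcup\{S : x\in S\subset A,\ S\ \mathcal G\text{-connected}\}$ be the $\mathcal G$-component of $A$ through $x$. I would first record two preliminary facts: (a) $C$ is itself $\mathcal G$-connected, because a union of $\mathcal G$-connected subsets sharing a common point $x$ is $\mathcal G$-connected---any $\mathcal G$-separation $U\cup V$ of the union restricts to a $\mathcal G$-separation $(S\cap U)\cup(S\cap V)$ of each member $S$, forcing each $S$ into the piece containing $x$; and (b) for any $S\subset A$ one has $\cl_{\mathcal G_A}(S)=A\cap\cl_{\mathcal G}(S)$, so that, since $A$ is $\mathcal G$-closed and hence $\cl_{\mathcal G}(C)\subset\cl_{\mathcal G}(A)=A$, the $\mathcal G$-closure of $C$ is unambiguous and lies inside $A$.

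The heart of the argument is to show that $D:=\cl_{\mathcal G}(C)$ is $\mathcal G$-connected. Note $D$ is $\mathcal G$-closed (being a $\mathcal G$-closure), so $\cl_{\mathcal G_D}(\cdot)=\cl_{\mathcal G}(\cdot)$ on subsets of $D$. Suppose, for contradiction, $D=U\cup V$ with $U,V$ nonempty and $\mathcal G$-separated, i.e. $\cl_{\mathcal G}(U)\cap V=\cl_{\mathcal G}(V)\cap U=\emptyset$. Then $C=(C\cap U)\cup(C\cap V)$ and, using (b) with $A$ replaced by $C$, $\cl_{\mathcal G_C}(C\cap U)\cap(C\cap V)\subset\cl_{\mathcal G}(U)\cap V=\emptyset$ and symmetrically; thus $C\cap U$ and $C\cap V$ form a $\mathcal G$-separation of $C$, so one of them is empty, say $C\cap V=\emptyset$. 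Then $C\subset U$, hence $V\subset D=\cl_{\mathcal G}(C)\subset\cl_{\mathcal G}(U)$, contradicting $\cl_{\mathcal G}(U)\cap V=\emptyset$. Therefore $D$ is $\mathcal G$-connected.

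To finish: $D$ is a $\mathcal G$-connected subset of $A$ containing $x$, so by the defining maximality of $A_x$ we get $D\subset A_x=C$; combined with $C\subset D$ this gives $C=\cl_{\mathcal G}(C)$, i.e. $C$ is $\mathcal G$-closed in $X$. I expect the only real friction to be the bookkeeping with subspace versus ambient $\mathcal G$-closures: one must establish the formula $\cl_{\mathcal G_Y}(S)=Y\cap\cl_{\mathcal G}(S)$ for $S\subset Y$ from the description of the induced $\mathcal G$-topology, use the $\mathcal G$-closedness of $A$ (and of $\cl_{\mathcal G}(C)$) to drop the intersection where needed, and check that Cs\'{a}sz\'{a}r's closure-based notion of $\mathcal G$-separation interacts with restriction exactly as the classical one does. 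Once those are in place, the displayed inclusions above constitute the whole proof.
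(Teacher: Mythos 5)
Your argument is correct, but note that the paper does not actually prove Theorem \ref{murad2}: it simply quotes the result from Cs\'{a}sz\'{a}r's paper on $\gamma$-connected sets \cite{csaszar14}, so your write-up supplies a proof the paper omits. The route you take is the classical one --- show that the $\mathcal G$-closure of a $\mathcal G$-connected set is $\mathcal G$-connected and then invoke the maximality in the definition of $A_x$ --- and every step survives the passage to generalized topologies: arbitrary intersections of $\mathcal G$-closed sets are $\mathcal G$-closed (so $\cl_{\mathcal G}$ is well defined, monotone, and idempotent on closed sets, even though $X$ need not be $\mathcal G$-open), the subspace formula $\cl_{\mathcal G_Y}(S)=Y\cap\cl_{\mathcal G}(S)$ follows from the description of $\mathcal G_Y$ exactly as you indicate, subspace $\mathcal G$-topologies are transitive (so ``connected in $A$'' and ``connected in $X$'' agree for subsets of $A$), and subsets of $\mathcal G$-separated sets are $\mathcal G$-separated, which is all that your two separation arguments (for the union with common point $x$, and for $D=\cl_{\mathcal G}(C)$) use. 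The hypothesis that $A$ is $\mathcal G$-closed enters only where you use it, namely to get $\cl_{\mathcal G}(C)\subset A$ so that $D$ competes in the union defining $A_x$. One small reading point you handled implicitly and correctly: the paper's definition of $\mathcal G$-connectedness must be understood with the two separated sets nonempty (otherwise $X=X\cup\emptyset$ would disconnect everything), and your contradiction in the key step uses precisely the nonemptiness of $V$.
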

Hence the maximal component of  $\mathcal G$-topological space $X$ is again $\mathcal G$-closed.
Then we can give the following result.
\begin{thm}
Let $G$ be a $ \mathcal G$-topological group and let $e$ be the unit element of $G$. Then the maximal connected component containing $e$ is $\mathcal G$-closed normal subgroup of $G$.
\end{thm}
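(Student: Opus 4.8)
The plan is to show that the maximal connected component $C$ of $G$ containing $e$ is simultaneously a subgroup and normal, and then to invoke Theorem \ref{murad2} for the $\mathcal G$-closedness. Since $G$ itself is a $\mathcal G$-topological space which, being a $\mathcal G$-topological group, is topologically homogeneous, the component $C$ is the maximal component of the (trivially $\mathcal G$-closed, if $G$ is strong) space $G$; Theorem \ref{murad2} then gives that $C$ is $\mathcal G$-closed. The substantive content is therefore the algebraic part: $C$ is a normal subgroup.

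First I would show $C$ is closed under multiplication. Fix $g \in C$ and consider the left translation $L_g : G \to G$, which is a $\mathcal G$-homeomorphism by the first theorem of Section 2. Then $L_g(C) = gC$ is a $\mathcal G$-connected subset of $G$ (by Theorem \ref{murad3}, since a $\mathcal G$-homeomorphism is $\mathcal G$-continuous), and it contains $g = g e \in gC$. Because $g \in C$ and $C$ is connected, the union $C \cup gC$ is a $\mathcal G$-connected set containing $e$: indeed $C$ and $gC$ share the point $g$, so their union cannot be split into two $\mathcal G$-separated pieces. By maximality of the component $C$ containing $e$, we get $C \cup gC \subset C$, hence $gC \subset C$. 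Since $g \in C$ was arbitrary, $CC \subset C$. Next, the inverse map $\iota : G \to G$, $\iota(x) = x^{-1}$, is a $\mathcal G$-homeomorphism, so $\iota(C) = C^{-1}$ is $\mathcal G$-connected and contains $e^{-1} = e$; by maximality $C^{-1} \subset C$. Together with $e \in C$, this makes $C$ a subgroup of $G$.

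For normality, fix $g \in G$ and let $\gamma_g = L_g \circ R_{g^{-1}}$ be conjugation by $g$; by Theorem \ref{murad4}(iv)'s underlying observation (conjugation is a composition of two translations, each a $\mathcal G$-homeomorphism), $\gamma_g$ is a $\mathcal G$-homeomorphism of $G$. Hence $\gamma_g(C) = gCg^{-1}$ is $\mathcal G$-connected and contains $\gamma_g(e) = e$, so by maximality $gCg^{-1} \subset C$ for every $g \in G$. Thus $C$ is a normal subgroup. Finally, applying Theorem \ref{murad2} (with $A = G$, or more carefully with $A$ the $\mathcal G$-closure of $G$, which is $\mathcal G$-closed) yields that $C$ is $\mathcal G$-closed in $G$, completing the proof.

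The main obstacle I anticipate is the step "$C \cup gC$ is $\mathcal G$-connected" — one must be slightly careful, because in the generalized-topology setting connectedness is phrased via $\mathcal G$-separated pairs rather than clopen partitions, and the usual lemma "the union of two connected sets with a common point is connected" needs to be verified in this language (if $C \cup gC = P \cup Q$ with $P,Q$ $\mathcal G$-separated, then the common point lies in one of them, say $P$, forcing the connected sets $C$ and $gC$ each to lie entirely in $P$, so $Q = \emptyset$). A secondary subtlety is that Theorem \ref{murad2} is stated for the maximal component of a $\mathcal G$-closed set, so one should note that $G$ (or its $\mathcal G$-closure) plays that role; if $G$ is strong this is immediate since then $G$ is both $\mathcal G$-open and $\mathcal G$-closed.
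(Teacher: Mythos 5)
Your argument is correct and follows the same overall strategy as the paper: translations, inversion and conjugation are $\mathcal G$-homeomorphisms, images of $\mathcal G$-connected sets under $\mathcal G$-continuous maps are $\mathcal G$-connected (Theorem \ref{murad3}), maximality of the component forces the relevant inclusions, and Theorem \ref{murad2} gives $\mathcal G$-closedness; your normality step is literally the paper's. The one genuine difference is the subgroup step: the paper handles it in a single stroke by taking $a\in F$ and considering $aF^{-1}$, the image of $F$ under the $\mathcal G$-continuous map $x\mapsto ax^{-1}$; this set is $\mathcal G$-connected and contains $e=aa^{-1}$ \emph{automatically}, so maximality gives $aF^{-1}\subset F$, i.e.\ $ab^{-1}\in F$ for all $a,b\in F$. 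That trick sidesteps exactly the obstacle you flagged as the main one, namely the lemma that the union of two $\mathcal G$-connected sets with a common point is $\mathcal G$-connected, which your route to $gC\subset C$ does require (your sketch of it is fine, since $\cl_{\mathcal G}$ is monotone, so subsets of $\mathcal G$-separated sets are $\mathcal G$-separated; note that the paper also implicitly needs this lemma just to know the component itself is connected). One small correction: your hedge about needing $G$ strong (or passing to $\cl_{\mathcal G}(G)$) before applying Theorem \ref{murad2} is unnecessary, because $G$ is always $\mathcal G$-closed in itself --- its complement $\emptyset$ is $\mathcal G$-open by definition of a generalized topology; strongness concerns $G$ being $\mathcal G$-open, not $\mathcal G$-closed.
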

\begin{proof}
Let $F$ be the maximal component of the identity $e$. By Theorem \ref{murad2}, $F$ is $\mathcal G$-closed.

Let $a\in F.$ Since the multiplication and inversion functions in $G$ are $\mathcal G$-continuous,by \ref{murad3}, the set $aF^{-1}$ is also $\mathcal G$-connected, and since $e\in aF^{-1}$ we must have $aF^{-1}\subset F.$ Hence, for every $b\in F$ we have $ab^{-1}\in F,$ i.e. $F$ is a subgroup of $G$.

If $g$ is an arbitrary element of $G$, then $L_{g^{-1}}\circ R_g (F) = g^{-1}Fg$ is a $\mathcal G$-connected subset containing $e$. Since $F$ is maximal, $g^{-1}Fg \subset F$ for every $g\in G$, i.e. $F$ is a normal subgroup. 
\end{proof}
\begin{thm}
Let $G$ be a $\mathcal G$-connected $\mathcal G$-topological group, and let $U$ be a $\mathcal G$-open set containing identity element $e_G$ such that $U$ contains a symmetric $\mathcal G$-open set $V$ containing $e_G$. Then $G= \bigcup_{n=1}^\infty U^n$.
\end{thm}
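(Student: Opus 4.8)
The plan is to show that $H := \bigcup_{n=1}^{\infty} U^n$ is a $\mathcal G$-open subgroup of $G$, and then to invoke the corollary that a $\mathcal G$-connected $\mathcal G$-topological group cannot have a proper $\mathcal G$-open subgroup, forcing $H = G$. First I would verify that $H$ is a subgroup. Closure under multiplication is immediate: if $x \in U^m$ and $y \in U^n$ then $xy \in U^{m+n} \subset H$. For closure under inverses I would use the symmetric $\mathcal G$-open set $V \subset U$ with $V = V^{-1}$ and $e_G \in V$. Set $K := \bigcup_{n=1}^{\infty} V^n$. Since $V \subset U$ we have $V^n \subset U^n$, so $K \subset H$; conversely $e_G \in V$ gives $U^n \subset U^n V \subset \dots$, but more directly one notes $U \subset V \cdot U$ need not hold, so instead I would work with $K$ throughout: $K$ is closed under multiplication, and $(V^n)^{-1} = V^{-1}\cdots V^{-1} = V^n$ since $V$ is symmetric, so $K$ is symmetric and hence closed under inverses; thus $K$ is a subgroup. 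Replacing $U$ by $V$ at the outset (legitimate, since the hypothesis hands us such a $V$), I may assume $U$ itself is symmetric, so that $H = \bigcup_n U^n$ is directly a subgroup.

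Next I would show $H$ is $\mathcal G$-open. Each $U^n$ is $\mathcal G$-open: $U^2 = U\cdot U = \bigcup_{a \in U} L_a(U)$ is a union of $\mathcal G$-open sets by Theorem~\ref{murad6} (taking $A = U$), and inductively $U^{n+1} = U^n \cdot U$ is $\mathcal G$-open by the same theorem. Hence $H$, being a union of the $\mathcal G$-open sets $U^n$, is $\mathcal G$-open. In particular $H$ is a non-empty $\mathcal G$-open subgroup of $G$.

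Finally, $H$ is a $\mathcal G$-open subgroup of the $\mathcal G$-connected group $G$. By the corollary stating that a $\mathcal G$-topological group having a $\mathcal G$-open subgroup is not $\mathcal G$-connected, $H$ cannot be a proper subgroup; therefore $H = G$, i.e. $G = \bigcup_{n=1}^{\infty} U^n$. (One should note that since $G$ is $\mathcal G$-connected it is in particular strong, so $G$ is genuinely covered; and since $V \subset U$ one has $U^n \subset U^n$ trivially, giving $\bigcup_n V^n \subset \bigcup_n U^n$, so the conclusion stated with $U$ follows from the one proved with $V$.)

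The step I expect to be the main obstacle is the bookkeeping around symmetry: the statement is phrased with the possibly non-symmetric $U$, while the natural subgroup to build is $\bigcup_n V^n$ using the symmetric $V$. The cleanest route is to prove $\bigcup_n V^n = G$ first and then observe $\bigcup_n V^n \subset \bigcup_n U^n \subset G$; everything else is a routine application of Theorem~\ref{murad6} and the $\mathcal G$-open-subgroup corollary.
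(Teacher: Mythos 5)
Your proposal is correct and follows essentially the same route as the paper: form $H=\bigcup_{n\geq 1}V^n$ with the symmetric $V$, check it is a subgroup, use Theorem~\ref{murad6} inductively to see it is $\mathcal G$-open, conclude $H=G$ from $\mathcal G$-connectedness, and finish with $V\subset U$. The only cosmetic difference is that you cite the corollary about $\mathcal G$-open subgroups and connectedness, whereas the paper argues directly via Theorem~\ref{murad7} that an open subgroup is also closed and then uses that a non-empty clopen subset of a $\mathcal G$-connected space must be everything; these are the same argument.
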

\begin{proof}
By induction on $n$ and Theorem \ref{murad6}, for every positive integer $n$, $V^n$ is $\mathcal G$-open. Hence $\bigcup_{n=1}^\infty V^n$ is $\mathcal G$-open. Let $H= \bigcup_{n=1}^\infty V^n$. Since $V$ is symmetric, for every positive integer $n$, $V^n$ is symmetric. Hence $H=H^{-1}$. Also we have $V^kV^l=V^{(k+l)}$. Hence $HH=H$. So, $H$ is a subgroup of $G$. Since $H$ is a $\mathcal G$-open subgroup of $G$, by Theorem \ref{murad7}, $H$ is also $\mathcal G$-closed. Since $G$ is $\mathcal G$-connected, and $H$ is non-empty both $\mathcal G$-closed and $\mathcal G$-open, $G=H$. As $V \subset U$, it follows that $G= \bigcup_{n=1}^\infty U^n$.
\end{proof}

\end{document}